\newcommand{\p}{\partial}
\theoremstyle{definition}
\newtheorem{definition}{Definition}%[section]
\newtheorem{theorem}{Theorem}%[section]
\newtheorem{lemma}[theorem]{Lemma}
\newtheorem{proposition}[theorem]{Proposition}
\newtheorem{corollary}[theorem]{Corollary}
\newtheorem{remark}[theorem]{Remark}
\begin{document}
\title{Compacitification and positive mass theorem \\
for fibered Euclidean end }
\author{Xianzhe Dai}
\address{
Department of Mathematics, 
University of Californai, Santa Barbara
CA93106,
USA}
\author{Yukai Sun}
\address{
School of Mathematics Sciences, East China Normal University
Shanghai, China}
\date{\today}
\keywords{Positive mass theorem, }

\begin{abstract}
In this note, we consider the positive mass theorem for Riemannian manifolds $(M^{n},g)$ asymptotic to $(\mathbb{R}^{k}\times X^{n-k}, g_{\mathbb{R}^{k}}+g_{X})$ for $k\geq 3$ by studying the corresponding compactification problem.
\end{abstract}

\maketitle

\section{Introduction}

The famous Positive Mass Theorem \cite{SY, W} states that an asymptotically Euclidean manifold with nonnegative scalar curvature must have nonnegative ADM mass (if the dimension of the manifold is between $3$ and $7$ or if the manifold is spin; for the recent progress about the higher dimensional non-spin manifolds, see the preprint \cite{SY1}). Furthermore, the mass is zero iff the manifold is the Euclidean space.

A crucial ingredient in Schoen-Yau's approach (after Lohkamp's observation) is the idea of compactification. In the end, the Positive Mass Theorem for $M$ is reduced to showing that, for the one-point compactification $M_1$ of $M$, the connected sum $M^{n}_{1}\# T^n$ admits no positive scalar curvature metric. Roughly speaking, one compactifies the asymptotically Euclidean manifold $M^{n}$ to $M^{n}_{1}\# T^{n}$.

In \cite{D}, motivated by string theory, the first author proves a Positive Mass Theorem for manifolds which are asymptically approaching the product of an Euclidean space and a compact Calabi-Yau (or any exceptional holonomy) manifolds. The approach there is along that of Witten, using Dirac operators, and hence our manifolds are assumed to be spin. More crucially, one needs nonzero parallel spinors for the asymptotic infinity which limits the geometry at infinity. For technical reasons, we restricts to simply connected compact factors, although the method extends to non-simply connected case with some additional assumptions on the spin structure. This was later considered by \cite{Minerbe1} who treated the case of $S^1$ factor (Minerbe is motivated by the study of gravitational instantons; he also allows non-trivial circle fibrations). 
More recently, Positive Mass Theorems for the $S^1$ or $T^2$ factors have been considered in \cite{LiuShiZhu}, and more generally flat factors in \cite{CLSZ}. 

%Now we want to see for all closed compact $M^{n}_{1}$ and a closed compact $X$, if $M^{n}_{1}\#_{X}T^{k}\times X$ admits no positive scalar curvature. Can we get the positive mass theorem on the asymptotic manifold $M^{n}$ of the following type?

More precisely, a complete Riemannian manifolds $(M^{n},g)$ is said to have asymptotically fibered Euclidean end if $M^{n}=M_{0}\cup M_{\infty}$ with $M_{0}$ compact and $M_{\infty}\simeq(\mathbb{R}^{k}\backslash B_{R}(0))\times X (k\geq 3)$ for some $R>0$ and $X$ a close $n-k$ dimensional manifold. Moreover,  the metric on $M_{\infty}$ satisfies
\begin{eqnarray*}
  g&=&\hat{g}+h,\quad \hat{g}=g_{\mathbb{R}^{k}}+g_{X},\\
  h&=&O(r^{-\tau}),\;\hat{\nabla}h=O(r^{-\tau-1}),\; \hat{\nabla}\hat{\nabla}h= O(r^{-\tau-2}),\; \hat{\nabla}\hat{\nabla}\hat{\nabla}h= O(r^{-\tau-3})
  \end{eqnarray*}
where $\hat{\nabla}$ is the Levi-Civita connection of $\hat{g}$, $\tau>\frac{k-2}{2}$ is the asymptotical order and $r$ is the Euclidean distance to a point. We will also call such $M^{n}$ a manifold with asymptotic end $\mathbb{R}^{k}\times X$.

 The mass for such a space is then defined by \cite{D}
 $$m(g)=\lim_{R\to\infty}\frac{1}{\omega_{k}\operatorname{Vol}(X)}\int_{S_{R}\times X}(\hat{\nabla}_{e_{a}}g_{ja}-\hat{\nabla}_{j}g_{aa})\ast dx_{j}d\operatorname{vol}_{g_{X}}.$$
 where $\{e_{a}\}=\{\frac{\partial}{\partial x_{i}},f_{\alpha}\}$ is an orthornormal basis of $\hat{g}$ and the $\ast$ operator is the  one on the Euclidean factor, the indices $i,j$ run over the Euclidean factor and the index $\alpha$ runs over $X$ while the index $a$ runs over the full index of the manifold $M^{n}$. (There is an additional factor of $\frac{1}{4}$ in \cite{D}.)

Strictly speaking, the existence of the limit in the definition of the mass is only guaranteed under additional assumptions
such as the integrability of the scalar curvature \cite{B}. (More generally, one replaces the limit by $\lim \sup$.) For this reason (and some others) we now assume that the metric $g_X$ on $X$ has zero scalar curvature. 

On the other hand, if $M^n_1$ is a closed $n$-dimensional manifold and $X$ embeds in $M^n_1$ with trivial normal bundle, then one can construct the connected sum of $M^n_1$ and $T^k \times X$ along $X$,
$M^{n}_{1}\#_{X}T^{k}\times X$. Our main result is

\begin{theorem}\label{maintheorem}
  If for any closed manifold $M^{n}_{1}$ admitting an embedding of $X$ with trivial normal bundle, the connected sum along $X$ $$M^{n}_{1}\#_{X}T^{k}\times X$$ has no metrics of positive scalar curvature, then the mass of any manifold  $M^{n}$ with asymptotic end $\mathbb{R}^{k}\times X$ and nonnegative scalar curvature is nonnegative.
  Moreover, $(M,g)\equiv (\mathbb{R}^{k}\times X,g_{\mathbb{R}^{k}}+g_{X})$  if the mass of $(M^{n},g)$ is zero.
\end{theorem}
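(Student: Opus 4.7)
The plan is to argue by contradiction using a fibered analogue of the Schoen--Yau--Lohkamp compactification method. Assume $m(g) < 0$. I would first produce, by a compactly supported deformation, a new metric $\tilde g$ on $M$ satisfying (i) $R_{\tilde g} \geq 0$ everywhere with $R_{\tilde g} > 0$ on some open set, and (ii) $\tilde g$ equals the exact product $g_{\mathbb{R}^k} + g_X$ outside a large compact set. The idea is that a strictly negative mass leaves enough quantitative room to absorb the asymptotic perturbation $h$ into a compactly supported deformation while keeping scalar curvature nonnegative. One would first conformally change $g$ via $u^{4/(n-2)} g$, where $u \to 1$ solves a suitable equation of the form $-c_n \hat\Delta u + R_g u = 0$, producing a scalar-flat metric whose mass is still negative; a second cut-and-paste supported on the end (and solved by the conformal Laplacian, using that $\hat g$ itself has $R_{\hat g} = 0$ by the hypothesis on $g_X$) completes the reduction. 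The decay $\tau > (k-2)/2$ enters here to ensure both the existence of $u$ in the relevant weighted Sobolev space on the product end and the integrability of the quadratic error terms in the scalar curvature transformation formula.

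With $\tilde g$ coinciding with the standard product outside a compact set, I would then identify $\mathbb{R}^k$ with a torus $T^k$ via the periodic quotient $\mathbb{R}^k/(L\mathbb{Z})^k$ for $L$ much larger than the radius of the modified region. Because $\tilde g$ is the pure product near infinity, the quotient is well-defined and smooth, and yields a closed Riemannian manifold $(N, \bar g)$ with $R_{\bar g} \geq 0$ and $R_{\bar g} > 0$ somewhere. A direct topological inspection shows $N \cong M^n_1 \#_X T^k \times X$, where $M^n_1$ is the natural compactification obtained by capping the end of $M$ with $D^k \times X$; the copy of $X \subset M^n_1$ sits with trivial normal bundle, and the connected sum along $X$ reproduces $N$. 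A standard conformal bump on the closed manifold $N$ (using that $\bar g$ has positive Yamabe-type invariant because $R_{\bar g}\geq 0$ with strict inequality somewhere) upgrades this to strictly positive scalar curvature on $N$, contradicting the hypothesis, and hence $m(g) \geq 0$.

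The rigidity statement would be proved by a perturbation argument in the spirit of Schoen--Yau: if $m(g) = 0$ but $g \not\equiv g_{\mathbb{R}^k} + g_X$, one constructs a compactly supported symmetric $2$-tensor $h$ together with a small conformal correction so that the family $g_t = g + th$ (suitably corrected) has $R_{g_t} \geq 0$ and $\frac{d}{dt}\big|_{t=0} m(g_t) < 0$, violating the nonnegativity just established. The infinitesimal computation reduces to identifying the kernel of the linearized operator, which should consist only of infinitesimal isometries of the product model, from which $g = \hat g$ follows.

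The main obstacle I anticipate is the fibered Lohkamp reduction in the first paragraph. In the purely Euclidean case the analysis is simplified by the flatness of the background and the explicit nature of the Laplacian on $\mathbb{R}^k$. Here one must carry out weighted elliptic theory on the product end $\mathbb{R}^k \times X$, establishing Fredholm and surjectivity properties of $\hat\Delta$ on weighted Sobolev or H\"older spaces adapted to the product. This in turn rests on a Fourier decomposition along $X$: the zero-mode along $X$ behaves like the Euclidean Laplacian on $\mathbb{R}^k$ (where weighted analysis is classical), while the higher modes gain exponential decay from the spectral gap of $\Delta_X$. Once these analytic foundations are in place, the topological and conformal-geometric steps should follow the standard template closely.
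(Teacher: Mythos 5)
Your first part follows essentially the same route as the paper: a conformal deformation solved via the conformal Laplacian on the fibered end (with the weighted/Fourier-mode analysis along $X$ that you describe, which is exactly the Minerbe-style machinery the paper invokes), followed by a Lohkamp-type truncation of the conformal factor using the strict negativity of the mass, then the torus quotient identifying the result with $M^{n}_{1}\#_{X}T^{k}\times X$ and a final bump to strictly positive scalar curvature. Two details you should be aware of: the deformation is not literally compactly supported (the conformal factor only tends to $1$), and before solving the conformal Laplacian one must first cut off the non-conformal part of the perturbation (so that outside a compact set the metric is conformal to $g_{\mathbb{R}^{k}}+g_{X}$, which is what makes the Lohkamp truncation of the factor produce the exact product near infinity), and then verify that this cutoff changes the mass by an arbitrarily small amount so that it stays negative. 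These are quantitative but not conceptual differences from the paper.

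The rigidity sketch, however, has a genuine gap. The Schoen--Yau perturbation argument you propose (choosing $h$ with $R_{g_t}\geq 0$ and $\frac{d}{dt}\big|_{t=0}m(g_t)<0$) only yields, as in the paper, that $Sc_{g}\equiv 0$ and then $\operatorname{Ric}_{g}\equiv 0$ (take $h=\eta\operatorname{Ric}_{g}$); it cannot by itself give the global isometry $(M,g)\cong(\mathbb{R}^{k}\times X, g_{\mathbb{R}^{k}}+g_{X})$, and your appeal to ``the kernel of the linearized operator consisting of infinitesimal isometries of the product'' is not an argument: rigidity here is a global statement about a Ricci-flat manifold with a fibered end, not an infinitesimal one, and a priori nothing rules out a non-product Ricci-flat $(M,g)$ with zero mass at this stage. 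The paper's proof requires a substantial further step: construct global harmonic functions $y^{i}$ asymptotic to the Euclidean coordinates (via weighted elliptic theory), derive a refined expansion $g^{ij}=\delta_{ij}-c_{ij}r^{2-k}+O(r^{2-k-\epsilon_2})$, identify the mass with $\frac{k-2}{2}\sum_i c_i$, and then run a Bochner/Weitzenb\"ock integration showing $\sum_i\int_M|\nabla dy^{i}|^2\,d\operatorname{vol}_g=\omega_k\operatorname{Vol}(X)\,m(g)$, so that $m(g)=0$ forces the $dy^{i}$ to be parallel; the isometry with the product is then built from the flows of the parallel gradients together with the de Rham decomposition. (This is also where the comparison with the Gauss--Bonnet mass enters.) Without this boundary-term identity tying the mass to $\|\nabla dy^{i}\|_{L^2}$, the zero-mass hypothesis is never actually used beyond Ricci-flatness, and the conclusion $g=\hat g$ does not follow.
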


As in \cite{1}, the first part of Theorem \ref{maintheorem} is a consequence of the following result. Recall that if $M^{n}$ is a manifold with asymptotic end $\mathbb{R}^{k}\times X$, then $M^{n}$ decomposes as $M_{0}\cup M_{\infty}$ with $M_{0}$ compact and $M_{\infty}\simeq(\mathbb{R}^{k}\backslash B_{R}(0))\times X$ for some $R>0$. In particular $\p M_0\simeq S^{k-1} \times X$. 

\begin{theorem} \label{comp}
  Let $(M^{n}, g)$ be a manifold with asymptotic end $\mathbb{R}^{k}\times X$ and $M^n_1$ the closed manifold obtained from $M_0$ by attaching $B^k \times X$, where $B^k$ is the Euclidean $k$-ball.
  If the scalar curvature of $M^{n}$ is nonnegative but its mass is negative, then the connected sum along $X$ $$M^{n}_{1}\#_{X}T^{k}\times X$$ admits a metrics of positive scalar curvature. 
\end{theorem}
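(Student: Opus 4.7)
The plan is to adapt the Lohkamp compactification argument to the fibered-end setting, carried out in three stages: (i) use the negative mass to produce, by a compactly supported deformation on a large annular region, a new metric $\tilde g$ on $M$ that coincides with $\hat g=g_{\mathbb{R}^k}+g_X$ outside a compact set and still has $R_{\tilde g}\geq 0$, strictly positive on a set of positive measure; (ii) truncate the fibered end and glue in $(T^k\setminus B)\times X$ along the product sphere $S^{k-1}\times X$ to form the closed manifold $M_1\#_X T^k\times X$ carrying an induced metric with the same scalar-curvature properties; (iii) apply the standard conformal-Laplacian trick to upgrade this nonnegative, not-identically-zero scalar curvature to a strictly positive one on the closed manifold.

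For (i) the core ingredient is a fibered analogue of Lohkamp's lemma. I would first solve $L_g u := -\tfrac{4(n-1)}{n-2}\Delta_g u + R_g u = 0$ with $u\to 1$ at infinity; since $R_g\geq 0$, standard weighted elliptic theory on the end $\mathbb{R}^k\times X$ produces a positive solution. On the model end I would Fourier-decompose in the fibre $X$: the zero mode is governed by the Euclidean Laplacian $\Delta_{\mathbb{R}^k}$ (with Green's function decaying like $r^{-(k-2)}$ since $k\geq 3$), while all higher modes decay exponentially in $r$ because $X$ is compact with $R_{g_X}=0$. Applying this splitting to $u-1$, and Bartnik-type integration by parts using the asymptotic expansion of $h$, identifies the coefficient of $r^{-(k-2)}$ in the $X$-averaged part of $u$ with a positive multiple of $m(g)$, since the mass integrand at infinity is captured only by the fibrewise zero mode. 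Consequently the conformal metric $u^{4/(n-2)}g$ has $R\equiv 0$ and still has negative mass.

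With scalar curvature already zero outside a compact set, I would glue $u^{4/(n-2)}g$ to $\hat g$ on an annulus $\{R_1<r<R_2\}\times X$ by a convex combination $\tilde g = \psi\,\hat g + (1-\psi)\,u^{4/(n-2)}g$ with a radial cutoff $\psi$. A direct computation of $R_{\tilde g}$ on the annulus, followed by integration against the product volume element and integration by parts in the radial direction, produces a leading boundary term proportional to $m(g)$ together with error terms of strictly lower order under the decay hypotheses $\tau>(k-2)/2$. Since $m(g)<0$, taking $R_1,R_2$ large and $\psi$ appropriate lets one absorb the errors and arrange $R_{\tilde g}\geq 0$, strictly positive on an open subset of the annulus, completing (i). For (ii), remove the end $\{r>R_3\}\times X$ for some $R_3>R_2$ and attach $(T^k\setminus B_{R_3})\times X$ along $S^{k-1}_{R_3}\times X$; since both sides carry $g_{\mathbb{R}^k}+g_X$ near the gluing locus the attachment is smooth, producing a closed metric on $M_1\#_X T^k\times X$ with nonnegative scalar curvature, positive somewhere. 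For (iii), the first eigenvalue of the conformal Laplacian of this closed metric is then strictly positive, and a positive eigenfunction supplies a conformal metric of strictly positive scalar curvature.

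The main obstacle is the fibered Lohkamp step (i): identifying how $m(g)$ appears in the asymptotic expansion of the conformal factor on $\mathbb{R}^k\times X$, and controlling the annular interpolation. The delicate point is the mode decomposition of the conformal Laplacian on the model end, where only the fibrewise zero mode carries mass information while higher modes decay exponentially; one must confirm that under $h=O(r^{-\tau})$ with $\tau>(k-2)/2$ the higher modes and the cross terms produced by the cutoff contribute only subleading corrections that can be absorbed using $m(g)<0$. Once this fibered Lohkamp lemma is established, the gluing in (ii) and the conformal step (iii) are essentially standard.
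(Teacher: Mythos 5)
Your outline goes wrong at the heart of step (i), namely the replacement of Lohkamp's argument by an annular convex combination $\tilde g=\psi\,\hat g+(1-\psi)\,u^{4/(n-2)}g$. Nonnegativity of scalar curvature is a pointwise condition, but the only place the mass enters your argument is through an integrated boundary term; the cutoff produces terms in $R_{\tilde g}$ involving $\partial\psi$ and $\partial^2\psi$ contracted with $u^{4/(n-2)}g-\hat g$ and its derivatives, which have no sign, and there is nothing to absorb them pointwise because the scalar curvature is identically zero on the gluing region before the interpolation. Worse, the order counting is off: the deviation $h=g-\hat g$ is only $O(r^{-\tau})$ with $\tau>(k-2)/2$ allowed to be well below $k-2$, so the unsigned error created by cutting $h$ off at radius $R$ is of size $R^{-\tau-2}$, which dominates, rather than being dominated by, the mass contribution of size $R^{-k}$. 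Hence ``absorb the errors using $m(g)<0$'' cannot work as stated: a negative integrated quantity does not control indefinite pointwise terms of larger order.

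This is exactly why the paper's proof is organized into two separate steps. First one cuts off only the non-conformal part $\bar g$ of the metric at scale $\sigma$ and solves the conformal Laplacian equation for the truncated metric $g^{\sigma}$, with the unsigned scalar curvature on the transition annulus fed into the equation through a cutoff $\varphi$; this produces a metric that is exactly of the form $\tilde u^{4/(n-2)}\hat g$ outside a compact set with nonnegative scalar curvature, and the delicate point --- for which your proposal has no counterpart --- is to show that the resulting change of mass, $A_{\sigma}=C\int\varphi\,Sc_{g^{\sigma}}\tilde u\,d\operatorname{vol}$, tends to zero (using the divergence structure of the scalar curvature and Bartnik-type convergence of the mass integral), so the mass stays negative. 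Only then does the Lohkamp step apply: since $\tilde u$ is $\hat g$-harmonic near infinity and $\tilde u<1$ there precisely because the mass is negative, one replaces $\tilde u$ by $\zeta\circ\tilde u$ with $\zeta$ concave, nondecreasing and constant near its top value, which keeps $\Delta_{\hat g}(\zeta\circ\tilde u)\le 0$ pointwise and makes the metric exactly $g_{\mathbb{R}^{k}}+g_{X}$ near infinity, with strict subharmonicity (hence strictly positive scalar curvature) somewhere. This pointwise concavity mechanism, not an integrated mass estimate on an interpolation annulus, is where $m<0$ is used. Your steps (ii) and (iii) (torus gluing along $S^{k-1}\times X$ and the first-eigenfunction conformal deformation on the closed manifold) agree with the paper, and your zero-mode/Green's function asymptotics for the conformal factor is in line with its Section 3, but without repairing step (i) along the above lines the proof does not go through.
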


As application of Theorem \ref{maintheorem}, we have
\begin{corollary}
  Assume that either $X$ is enlargeable or $\hat{A}(X)$ is nonzero. Then the Positive Mass Theorem holds for any spin manifolds with asymptotic end $\mathbb{R}^{k}\times X$.
\end{corollary}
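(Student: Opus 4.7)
The plan is to invoke Theorem~\ref{maintheorem}: since $M^n$ is spin and $X$ embeds with trivial normal bundle into the asymptotic end of $M$, the manifold $X$ is itself spin. It suffices to verify, for every closed spin $n$-manifold $M_1^n$ into which $X$ embeds with trivial normal bundle, that the connected sum along $X$
\[
W \;:=\; M_1^n \#_X (T^k\times X)
\]
admits no metric of positive scalar curvature. (We focus on the case of spin $M_1$, which is the situation arising from Theorem~\ref{comp} applied to a spin $M$; for the enlargeable case this restriction is in fact inessential since enlargeability alone will suffice regardless of the spin condition on $M_1$.)

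The key construction is a degree one smooth map $\Phi: W \to T^k\times X$. Decompose
\[
W \;=\; \bigl(M_1\setminus (B^k\times X)\bigr)\;\cup_{S^{k-1}\times X}\;\bigl((T^k\setminus B^k)\times X\bigr),
\]
and set $\Phi$ on the second piece equal to the standard inclusion into $T^k\times X$. On the first piece, take $\Phi$ to be the identity on a collar neighborhood of $S^{k-1}\times X$ and then collapse the interior of $M_1\setminus (B^k\times X)$ onto the basepoint slice $\{0\}\times X\subset B^k\times X$. Over a regular value in $(T^k\setminus \overline{B^k})\times X$ the preimage is a single point, so $\Phi$ has degree one.

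In the enlargeable case, $T^k$ is enlargeable and a product of enlargeable spin manifolds is enlargeable, so $T^k\times X$ is enlargeable. Since enlargeability pulls back along degree one maps, $W$ is enlargeable, and hence admits no PSC metric by the Gromov--Lawson theorem. In the case $\hat A(X)\neq 0$, I would twist the Dirac operator on $W$ by the flat Mishchenko bundle pulled back via the composition $W\xrightarrow{\Phi} T^k\times X\to T^k=K(\mathbb{Z}^k,1)$. By the higher index theorem, the resulting class in $K_n(C^\ast(\mathbb{Z}^k))$ is computed by applying $\Phi_\ast$ to the Dirac class of $W$; a Kunneth / fiber integration computation on $T^k\times X$ (using that the generic fiber of the composition to $T^k$ is $X$ and that $\Phi$ has degree one) identifies this with $\hat A(X)$ times the fundamental generator of $K_k(C^\ast(\mathbb{Z}^k))\otimes\mathbb{Q}$, hence nonzero. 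By Rosenberg's theorem this contradicts PSC on $W$.

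The principal obstacle is the $\hat A(X)\neq 0$ case: one must carefully justify naturality of the Rosenberg higher index under the degree one (but not submersive) map $\Phi$, and then carry out the Kunneth / fiber-integration computation on $T^k\times X$ to identify the resulting class with $\hat A(X)$ times the fundamental generator. The enlargeable case, by contrast, is essentially formal once the degree one map $\Phi$ is in hand.
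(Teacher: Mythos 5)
Your reduction and your treatment of the enlargeable case coincide with the paper's own argument: the paper likewise works with the natural (spin) compactification $M_1$ of a spin $M$, constructs the same degree one collapse map $\Phi\colon M_1\#_X(T^k\times X)\to T^k\times X$, notes $T^k\times X$ is enlargeable, and invokes Proposition 5.6 of \cite{6}. Where you genuinely diverge is the case $\hat A(X)\neq 0$: the paper composes $\Phi$ with the projection $p\colon T^k\times X\to T^k$, observes that the composite has nonzero $\hat A$-degree (its regular fiber is $X$, so the $\hat A$-degree is $\hat A(X)$), and then cites Proposition 6.5 of \cite{6} to conclude that the connected sum is enlargeable in dimension $k$ and hence admits no PSC metric; you instead propose to twist the Dirac operator by the Mishchenko bundle pulled back from $T^k$ and appeal to the higher index theorem and Rosenberg's theorem. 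Both routes hinge on the identical geometric input (the map to $T^k$ whose generic fiber is $X$), but the paper's is an elementary citation of Gromov--Lawson, while yours imports $C^*$-algebraic index theory; what your route buys is independence from the specific enlargeability formalism, at the cost of the K\"unneth-type computation you leave open. Note, though, that the step you single out as the principal obstacle, naturality of the Rosenberg index under the non-submersive map $\Phi$, is not actually needed: you never push an index forward along $\Phi$, you only compute on $W$ itself the Mishchenko--Fomenko index of the Dirac operator twisted by $(p\circ\Phi)^*$ of the flat bundle, and its relevant (top) coefficient is precisely the $\hat A$-degree of $p\circ\Phi$, i.e.\ $\hat A$ of a regular fiber, i.e.\ $\hat A(X)$, which is the paper's computation in analytic disguise. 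One small caveat: your parenthetical claim that in the enlargeable case the spin condition on $M_1$ is inessential overreaches, since the Gromov--Lawson obstruction for enlargeable manifolds requires a spin hypothesis (the paper imposes it too); this is harmless here because $M$, hence $M_1$ and the connected sum, are spin.
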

For definitions of the enlargeability and the $\hat{A}$-genus $\hat{A}(X)$ see Section $5$ of \cite{6}.

As we assume $(X, g_X)$ is scalar flat, if $X$ is further assumed to be simply connected and $\dim X \geq 5$, then in fact $(X, g_X)$ is the product of Calabi-Yau manifolds or $Spin(7)$-manifolds by Theorem 1 in \cite{F}. Thus we give another proof of the result in \cite{D}, except for the $G_2$-manifold factors. Simply connected $G_2$-manifolds always admit positive scalar curvature metrics by \cite{St}. There lies the difference between the approach in \cite{D} and the current approach. We emphasize that our result for the non-simply connected case is new.

% For some special cases, if $X^{n-k}$ is aspherical with $4\leq n-k\leq 5$, $n\leq 8$, $k\geq 3$ and $M_{1}^{n}\#_{X}T^{k}\times X^{n-k}=T^{k}\times X^{n-k}$, then it does not admit positive scalar curvature by Theorem 2 in \cite{OCCL}. See Proposition 1.2 and Proposition 1.3 in \cite{CLSZ} for other examples. \textcolor{blue}{I don't understand this. Can you make it more explicit?}

The proof of the compacification theorem, Theorem \ref{comp}, consists of two steps. By solving a conformal Laplacian equation, one can deform the metric $g$ outside a compact set to a conformal metric of the product metric $g_{\mathbb{R}^{k}}+g_{X}$, while still maintaining negative mass and good scalar curvature control. The crucial thing here is the asymptotic behavior of the solution at infinity, which is obtained by studying the asymptotics of the Green's function of $\Delta_g$. In the second step, which is due to an observation of Lohkamp, one modifies the conformal factor by a subharmonic function (again outside a compact set) which is constant near infinity. Again the asymptotic behavior of the conformal factor is critical.

The rigidity part of the Positive Mass Theorem makes use of two mass notions and a result in \cite{CLSZ}, generalized to our situation. The so called Gauss-Bonnet mass is introduced in \cite{Minerbe1} for the case $S^1$ factor and it in general is different from the mass discussed above. But as in \cite{CLSZ}, under certain geometric conditions which is satisfied when the mass is zero, the two masses differ by a nonzero multiple. This is crucial in showing that the harmonic functions coming from the asymptotic coordinates have parallel differentials. We then use these functions with parallel differential to directly construct the required isometry. In \cite{CLSZ} the authors also obtain rigidity for the $S^1$ factor and more generally flat factors, although the conclusion is weaker for the general flat factors.

This paper is organized as follows.  In Section \ref{prelim}, we introduce manifolds with asymptotic end $\mathbb{R}^{k}\times X$ and their topological compactifications. For later purpose we also discuss the existence of 
positive scalar curvature metrics on such manifolds (and their generalized connected sums). In Section \ref{Green}, we study the asymptotic behavior at infinity of the Green function of the metric $g$. %This depends on understanding the Green's function of the product metric which is done by relating it to the heat kernel. 
In Section \ref{compact}, we solve the conformal Laplace equation and use it to prove the Theorem \ref{compact}. Finally in Section \ref{rigidity} we clarify the roles of the two mass quantities and establish the rigidity
statement in Theorem \ref{maintheorem}.

\section{Manifolds with asymptotic end $\mathbb{R}^{k}\times X$} \label{prelim}

In this section we give the precise definition of manifolds with asymptotic end $\mathbb{R}^{k}\times X$ and its mass(es). Let $g_{\mathbb{R}^{k}}$ be the standard Euclidean metric. Let $(X, g_X)$ be a compact Riemannian manifold.
\begin{definition}
  A complete noncompact smooth manifold $(M^{n},g)$ is called asympototically $(\mathbb{R}^{k}\times X^{n-k}, g_{\mathbb{R}^{k}}+g_{X})$ for $k\geq 3$, if there exists a compact subset $K\subset M^{n}$ such that $M^{n}\backslash K\simeq (\mathbb{R}^{k}-B_{R}(0))\times X $ for some $R>0$ and on $M^{n}\backslash K$
  \begin{eqnarray*}
  g&=&\hat{g}+h,\quad \hat{g}=g_{\mathbb{R}^{k}}+g_{X},\\
  h&=&O(r^{-\tau}),\;\hat{\nabla}h=O(r^{-\tau-1}),\; \hat{\nabla}\hat{\nabla}h= O(r^{-\tau-2}),\; \hat{\nabla}\hat{\nabla}\hat{\nabla}h= O(r^{-\tau-3})
  \end{eqnarray*}
    where $\hat{\nabla}$ is the Levi-Civita connection of $\hat{g}$, $\tau>\frac{k-2}{2}$ is the asymptotical order. In addition, we assume that $Sc_{g}\in L^{1}(M)$.
\end{definition}

As we mentioned, the assumption $Sc_{g}\in L^{1}(M)$ is imposed so that the ADM mass is well defined \cite{B}. In particular this would imply that the scalar curvature of $(X, g_X)$ is identically zero. From now on, we make the assumption that $Sc_{g_{X}}\equiv 0$. Let $\omega_{k}$ be the area of the unit sphere in $\mathbb{R}^{k}$.
\begin{definition}\label{massdef}
   The mass for such a space is then defined by \cite{D}
 $$m(g)=\lim_{R\to\infty}\frac{1}{\omega_{k}\operatorname{Vol}(X)}\int_{S_{R}\times X}(\hat{\nabla}_{e_{a}}g_{ja}-\hat{\nabla}_{j}g_{aa})\ast dx_{j}d\operatorname{vol}_{g_{X}}.$$
 where $\{e_{a}\}=\{\frac{\partial}{\partial x_{i}},f_{\alpha}\}$ is an orthornormal basis of $\hat{g}$ and the $\ast$ operator is the  one on the Euclidean factor, the indices $i,j$ run over the Euclidean factor and the index $\alpha$ runs over $X$ while the index $a$ runs over the full index of the manifold $M^{n}$.
\end{definition}

The same argument as in \cite{B} shows that $m(g)$ is well-defined metric invariant of $(M, g)$.
It reduces to, by the Fubini's theorem and divergence theorem,
\begin{eqnarray*}
% \nonumber % Remove numbering (before each equation)
  m(g) &=& \lim_{R\to\infty}\frac{1}{\omega_{k}\operatorname{Vol}(X)}\int_{S_{R}\times X}(\partial_{i}g_{ij}-\partial_{j}g_{aa})\ast dx_{j}d\operatorname{vol}_{g_{X}}.
\end{eqnarray*}

More intrinsically, it can be written as \cite{Minerbe1}
\begin{eqnarray}
m(g)=-\frac{1}{\omega_{k}\operatorname{Vol}_{X}}\lim_{R\to \infty} \int_{\partial B_{R}} *_{\hat{g}}(\operatorname{div}_{\hat{g}}g+d\operatorname{Tr}_{\hat{g}}g).
\end{eqnarray}
Here $*_{\hat{g}}$ is the Hodge star operator of the metric $\hat{g}$, $\operatorname{Tr}_{\hat{g}}$ indicates taking trace with respect to $\hat{g}$, and $(\operatorname{div}_{\hat{g}}T)_{i}=-\hat{g}^{jk}\partial_{j}T_{ik}$ in local coordinate for a $(0,2)$-type tensor $T$.

\begin{remark}
Without the integrability assumption $Sc_{g}\in L^{1}(M)$, one can still define the ADM mass
as above, except replacing the limit by $\lim \sup$ \cite{Minerbe1}.
\end{remark}

We now define another mass quantity, the so called Gauss-Bonnet mass, first introduced in \cite[Theorem 1]{Minerbe1} for the circle fibration. 
\begin{eqnarray}
m^{GB}(g)=-\frac{1}{\omega_{k}\operatorname{Vol}_{X}}\lim_{R\to \infty}\sup\int_{\partial B_{R}} *_{\hat{g}}(\operatorname{div}_{\hat{g}}g+d\operatorname{Tr}_{\hat{g}}g-\frac{1}{2}d(\operatorname{Tr}_{g_{X}} g)).
\end{eqnarray}
Here $\operatorname{Tr}_{g_X}$ indicates taking trace with respect to $g_X$. In other words, $\operatorname{Tr}_{g_{X}} g=\sum_{\alpha} g(f_{\alpha}, f_\alpha)$ for an orthonormal basis $f_{\alpha}$ of $(X, g_X)$. This mass quantity will play an important part in proving the rigidity part of Theorem \ref{maintheorem}; see Section \ref{rigidity}.

A manifold $(M, g)$ with asymptotic end $\mathbb{R}^{k}\times X$ has a natural topological compactification. 
Indeed, $M^{n}$ decomposes as $M_{0}\cup M_{\infty}$ with $M_{0}$ compact and $M_{\infty}\simeq(\mathbb{R}^{k}\backslash B_{R}(0))\times X$ for some $R>0$. In particular $\p M_0\simeq S^{k-1} \times X$. 
One can then ontain a closed manifold $M^n_1$ by attaching $B^k \times X$ to $M_0$, where $B^k$ is the Euclidean $k$-ball.
This is in some sense the fiberwise version of the one-point compactification for the asymptotically Euclidean spaces.

Note that $X$ embeds into $M_1$ with trivial normal bundle. In general, if $M^n_1$ is a closed $n$-dimensional manifold and $X$ embeds in $M^n_1$ with trivial normal bundle, then one can construct the connected sum of $M^n_1$ and $T^k \times X$ along $X$, denoted by $M^{n}_{1}\#_{X}T^{k}\times X$. In other words, $M^{n}_{1}\#_{X}T^{k}\times X$ is obtained from $M^n_1$ by removing a tubular neighborhood of $X$ and gluiing in $(T^k\backslash B_\epsilon(x))\times X$, where $x$ is a fixed point on $T^k$ and $B_\epsilon(x)$ an $\epsilon$-ball around $x$.

To consider applications of Theorem \ref{maintheorem}, we now look at some examples of manifolds of the type $M_{1}^{n}\#_{X}T^{k}\times X^{n-k}$ admitting no positive scalar curvature. 
For definitions of the enlargeability and the $\hat{A}(X)$ see Section $5$ in \cite{6}.

\begin{proposition}
Let $(M, g)$ be a manifold with asymptotic end $\mathbb{R}^{k}\times X$ and its natural compactification $M_1$ is spin. If either $X$ is enlargeable or $\hat{A}(X)\not=0$, then $M_{1}^{n}\#_{X}T^{k}\times X^{n-k}$ admits no metrics of positive scalar curvature.
\end{proposition}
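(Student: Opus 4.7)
The plan is to transfer the nonexistence of positive scalar curvature from $T^{k}\times X$ to $N := M_{1}\#_{X}T^{k}\times X$ via a degree-one comparison map constructed by cut-and-paste. First I would check that $N$ inherits a canonical spin structure: since $M_{1}$ and $T^{k}\times X$ are both spin and the trivial normal bundle of $X$ in each factor makes the spin structures agree along the gluing locus $S^{k-1}\times X$, the spin structures on $M_{0}=\overline{M_{1}\setminus\nu(X)}$ and $(T^{k}\setminus B^{k})\times X$ glue consistently. Next I would record that $T^{k}\times X$ itself admits no psc metric: when $X$ is enlargeable, so is $T^{k}\times X$ (a product of enlargeable spin manifolds is enlargeable, and $T^{k}$ is the archetypal enlargeable manifold), and then Gromov--Lawson's theorem (Section 5 of \cite{6}) applies; when $\hat{A}(X)\neq 0$, twisting the Dirac operator on $T^{k}\times X$ by a Bott bundle $E_{0}$ pulled back from $T^{k}$ yields a Dirac index equal to a nonzero multiple of $\hat{A}(X)$, and Gromov--Lawson's twisting trick together with the Lichnerowicz--Weitzenbock formula rules out psc.

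The heart of the argument is to construct a continuous map $\Phi : N \to T^{k}\times X$ of degree one. On the piece $(T^{k}\setminus B^{k})\times X \subset N$, I take $\Phi$ to be the natural inclusion. On the complementary piece $M_{0}$, I would define $\Phi$ to land inside the removed tubular neighborhood $B^{k}\times X \subset T^{k}\times X$, matching the inclusion $S^{k-1}\times X \hookrightarrow \partial(B^{k}\times X)$ along $\partial M_{0}$. The $B^{k}$-component of the extension is always available because $B^{k}$ is contractible and the obstructions in $H^{*}(M_{0},\partial M_{0};\pi_{*}(B^{k}))$ all vanish. The resulting $\Phi$ is degree one, since the preimage of any generic point of $(T^{k}\setminus B^{k})\times X$ consists of a single point. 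Pulling back the enlarging maps (respectively the twisted bundle $E_{0}$) via $\Phi$ then yields an enlarging structure on $N$ (respectively a nonzero twisted Dirac index on $N$), and the Lichnerowicz--Weitzenbock formula contradicts the existence of positive scalar curvature.

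The main obstacle lies in making the construction of $\Phi$ fully rigorous; specifically, in extending the projection $\operatorname{pr}_{X} : S^{k-1}\times X \to X$ across $M_{0}$. The obstructions live in $H^{*}(M_{0},\partial M_{0};\pi_{*}(X))$ and do not vanish for arbitrary $M_{0}$ and $X$. I would circumvent this either by passing to a suitable finite cover of $N$ in which the obstructions die, or by working directly with the map $N \to T^{k}$, which always exists because $B^{k}$ is contractible, and pulling back coefficients in $C^{*}(\pi_{1}(N))$ via the Rosenberg--Mishchenko--Fomenko higher-index framework. Either route converts the obstruction on $T^{k}\times X$ coming from the enlargeability or $\hat{A}$-genus hypothesis into a nonvanishing higher index on $N$, contradicting the existence of positive scalar curvature.
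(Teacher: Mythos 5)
Your overall strategy is the same as the paper's: compare $N=M_{1}\#_{X}(T^{k}\times X)$ with $T^{k}\times X$ by a cut-and-paste map and then invoke the Gromov--Lawson obstructions (Propositions 5.6 and 6.5 of \cite{6}). The genuine gap is exactly the one you flag yourself and then do not close: the $X$-component of your map $\Phi$. Extending $\operatorname{pr}_{X}:S^{k-1}\times X\to X$ over $M_{0}$ meets obstructions in $H^{*+1}(M_{0},\partial M_{0};\pi_{*}(X))$, and nothing in the hypotheses kills them; note also that the usual ``collapse the $M_{0}$-side to a point'' trick for ordinary connected sums fails here, because $B^{k}\times X$ is not a cone on $S^{k-1}\times X$ (equivalently, $\operatorname{pr}_{X}$ is not null-homotopic). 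Neither of your proposed repairs works as stated: (a) there is no reason the obstruction classes die in a finite cover of $N$, nor need covers of $N$ unwrapping $X$ exist, since $\pi_{1}(X)\to\pi_{1}(N)$ can fail to be injective; (b) the higher-index route through the honest map $N\to T^{k}$ only sees the $\mathbb{Z}^{k}$-direction, and the index one gets is essentially $\hat{A}$ of the generic fibre, i.e.\ $\hat{A}(X)$; this reproves the case $\hat{A}(X)\neq 0$ but gives nothing when $X$ is enlargeable with $\hat{A}(X)=0$ (e.g.\ $X$ a flat torus), which is half of the statement.

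For comparison, the paper's treatment of the case $\hat{A}(X)\neq 0$ sidesteps your obstruction entirely: it never needs a map to $T^{k}\times X$, only the composite $N\to T^{k}$ obtained by collapsing $M_{0}$ to the removed point of $T^{k}$ and radially collapsing a collar of the gluing region; this map is continuous without any extension problem, its generic fibre is $X$, so its $\hat{A}$-degree is $\hat{A}(X)\neq 0$, and Proposition 6.5 of \cite{6} applies (spinness of $N$ coming from that of $M_{1}$, as you observe). For the enlargeable case the paper, like you, uses a nonzero-degree map $N\to T^{k}\times X$ together with Proposition 5.6 of \cite{6}, asserting the map without addressing the extension over $M_{0}$; so the subtlety you identified is real there, and to complete that case along your lines you would need either to actually produce the degree-one map (for instance under an additional hypothesis ensuring $\operatorname{pr}_{X}$ extends over $M_{0}$) or to replace Proposition 5.6 by an obstruction that requires only the map to $T^{k}$ together with enlargeability of the fibre, neither of which your proposal supplies.
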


\begin{proof}
If $X$ is enlargeable, so is $T^{k}\times X$. As there is a degree one map $M_{1}^{n}\#_{X}T^{k}\times X^{n-k} \longrightarrow T^{k}\times X^{n-k}$, there are no metrics of positive scalar curvature  on $M_{1}^{n}\#_{X}T^{k}\times X^{n-k}$ by Proposition 5.6 in \cite{6} if $M_{1}$ is, in addition, a spin manifold.

On the other hand, if $\hat{A}(X)$ is nonzero and $M_{1}$ is spin, then one can first construct a degree one map $f_{1}:M_{1}^{n}\#_{X}T^{k}\times X^{n-k}\to T^{k}\times X^{n-k}$ as follows. One collapses the part of $M_{1}^{n}$ in the generalized connected sum to the point of $T^k$ where we remove the $\epsilon$-ball, while mapping the compliment of $2\epsilon$-ball identically to $T^k$, and the annulus region between the $2\epsilon$-ball and $\epsilon$-ball smoothly to  the $2\epsilon$-ball. The boundary of the $\epsilon$-ball is mapped to the center point and the map on the $X$ factor is the identity. The map $f_1$ is followed by the projection map $f_{2}: T^{k}\times X^{n-k}\to T^{k}$ which has nonzero $\hat{A}$-degree by the assumption. Then we know that $M_{1}^{n}\#_{X}T^{k}\times X^{n-k}$ is enlargeable in dimension $k$ by Proposition 6.5 in \cite{6}.  Thus it is also without positive scalar curvature.  
\end{proof}

\section{The Green's function} \label{Green} %for $\mathbb{R}^{k}\times X$
In this section we estimate the asymptotic order of the Green's function $G_{g}$ of $\Delta_g$ when $r$ is sufficiency large. This will be used crucially in the next section to establish the asymptotic behavior of the solution of conformal Laplace equation. First of all, we have
\begin{proposition}\label{Greenexist} 
Let $(M, g)$ be asymptotic to $\mathbb R^k \times X$. Then $(M, g)$ is nonparabolic. That is, there exists a positive Green function $G_{g}$ on $M$.
\end{proposition}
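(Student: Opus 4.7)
The plan is to invoke the classical Varopoulos volume-growth criterion for nonparabolicity: a complete Riemannian manifold $(M,g)$ admits a positive Green's function whenever
$$\int_{1}^{\infty}\frac{r\,dr}{V(r)}<\infty,$$
where $V(r):=\operatorname{Vol}_{g}(B_{r}(p))$ for some (equivalently, any) fixed basepoint $p\in M$. All the work is then in showing that the asymptotically product structure forces Euclidean-type volume growth on $(M,g)$.

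First I would fix $p\in M_{0}$ and compare the $g$-geodesic distance on $M_{\infty}\simeq(\mathbb{R}^{k}\setminus B_{R}(0))\times X$ with the ambient Euclidean radial coordinate $r$. Because $h=O(r^{-\tau})$ with $\tau>(k-2)/2>0$, on $M_{\infty}$ (after enlarging the compact set $K$) the metric $g$ is uniformly equivalent to $\hat{g}=g_{\mathbb{R}^{k}}+g_{X}$: there is a constant $C>0$ with $C^{-1}\hat{g}\le g\le C\hat{g}$. Consequently there are constants $c_{1},c_{2}>0$ such that $c_{1}|x|\le d_{g}(p,(x,y))\le c_{2}|x|+C'$ for all $(x,y)\in M_{\infty}$ with $|x|$ large. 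It follows that the $g$-geodesic ball $B_{r}(p)$ is sandwiched between two $\hat{g}$-cylinders $\{|x|\le c_{3}r\}\times X$ of comparable radii.

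Next I would use the volume-form comparison $d\mathrm{vol}_{g}=(1+O(r^{-\tau}))\,d\mathrm{vol}_{\hat{g}}$, which is bounded above and below on $M_{\infty}$, to deduce
$$V(r)\;\ge\;c\,\operatorname{Vol}(X)\,r^{k}\qquad\text{for all }r\ge r_{0}.$$
Since $k\ge 3$, this yields
$$\int_{1}^{\infty}\frac{r\,dr}{V(r)}\;\le\;C\int_{1}^{\infty}r^{1-k}\,dr\;<\;\infty.$$
Applying the Varopoulos (see also Grigor'yan) criterion then produces a positive Green's function $G_{g}$ on $M$; one may also construct it as the monotone limit, along an exhaustion $\Omega_{j}\nearrow M$, of the Dirichlet Green's functions $G_{j}$, the volume-growth bound ensuring that the limit is finite rather than identically $+\infty$.

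The only real technical point is the volume/distance comparison in the first two paragraphs, and this is routine given the strong pointwise decay hypotheses on $h$ and $\hat{\nabla}h$ in the definition of an asymptotically fibered Euclidean end; no obstacle arises from the topology of $X$ since the $X$-directions contribute only the bounded factor $\operatorname{Vol}(X)$ to $V(r)$.
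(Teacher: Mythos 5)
Your argument hinges on the claim that $\int_{1}^{\infty} r\,dr/V(r)<\infty$ by itself forces nonparabolicity, and that implication is false. The volume condition is only \emph{necessary}: the Grigor'yan--Karp--Varopoulos theorem says that if the integral diverges then $M$ is parabolic, but the converse fails in general. For instance, a surface of revolution whose profile function equals $1$ except for spikes of height about $2^{3j}$ on unit intervals near $r=2^{j}$ has $V(r)\gtrsim r^{3}$, so your integral converges, yet the classical criterion $\int^{\infty}dt/f(t)=\infty$ shows it is parabolic: thin necks make Brownian motion recurrent no matter how much volume the bulbs carry. The volume criterion becomes sufficient only under additional hypotheses such as nonnegative Ricci curvature (Li--Yau, Varopoulos) or volume doubling together with a scaled Poincar\'e inequality, and you verify neither; in the present setting $\operatorname{Ric}_{g_X}$, hence $\operatorname{Ric}_g$, need not be nonnegative. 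The same gap reappears in your closing sentence: in the exhaustion construction, the whole difficulty is to show the increasing limit of the Dirichlet Green's functions is finite, and the volume bound alone does not deliver that.

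Your first step --- uniform equivalence $C^{-1}\hat g\le g\le C\hat g$ on the end --- is the right reduction, and it can be completed without any volume argument, which is exactly what the paper does: nonparabolicity is a quasi-isometry invariant (\cite{LP}, Corollary 20.8, due to Royden), so one may assume $g=g_{\mathbb{R}^{k}}+g_{X}$ outside a compact set; there $r^{2-k}$ is a positive harmonic function whose infimum is attained at infinity, and Theorem 17.3 of \cite{LP} then produces the positive Green's function. If you prefer a volume-flavored route, you would have to check that $\mathbb{R}^{k}\times X$ (hence, by uniform equivalence, the end of $(M,g)$) satisfies doubling and Poincar\'e so that two-sided heat kernel bounds make the volume criterion an equivalence; as written, however, your proof has a genuine gap at its central step.
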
 
\begin{proof}
  By \cite[Coroallary 20.8]{LP}, which is attributed to Royden, nonparabolicity is a quasi-isometry invariant. Therefore we can assume that $g=g_{\mathbb R^k} + g_X$ outside a compact region. Now for $p=(x_0, y_0), x_0\in \mathbb R^k, y_0\in X,$ outside this compact region, let $r((x, y))=d_{\mathbb R^k}(x, 0)$ denote the Euclidean distance function from $x\in \mathbb R^k$ to $0\in \mathbb R^k$. Then $r^{2-k}$ is a harmonic function away from the compact region whose infinimum is achieved at infinity. Therefore by \cite[Theorem 17.3]{LP}, positive Green's function exists on $M$.
\end{proof}

Our main result in this section is the asymptotic behavior of $G_g$ as one approaches infinity. The following lemma gives a rough bound on this asymptotic behavior.

For each fixed $p\in M$ and any $(x,y)\in M\backslash K\simeq (\mathbb R^k\backslash B_1(0)) \times X$, let $r((x, y))=d_{\mathbb R^k}(x, 0)$ as above.  
\begin{lemma}
 For any $\epsilon>0$, there exist a larger compact set $K'\supset K$ containing $p$ and constant $C=C(\epsilon)>0$ such that, for $(x, y)$ outside $K'$, 
  \begin{equation}\label{Greenorder}
  G_{g}((x,y),p)\leq Cr^{2-k+\epsilon}((x,y)).  
\end{equation}
\end{lemma}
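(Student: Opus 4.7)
My plan is to dominate $G_g(\cdot,p)$ on the end by a superharmonic barrier of the form $v = C r^{2-k+\epsilon}$ and apply a maximum principle on an exhaustion.

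\textbf{Barrier construction.} I would first verify that $r^{2-k+\epsilon}$ is superharmonic for $\Delta_g$ outside a sufficiently large compact set, taking $\epsilon\in(0,k-2)$ (for $\epsilon\ge k-2$ the bound is trivial since $G_g(\cdot,p)$ is bounded on the end). Because $r$ depends only on the Euclidean coordinates, the model Laplacian reduces to the Euclidean one on $\mathbb R^{k}$:
$$\hat\Delta \, r^{2-k+\epsilon} = (2-k+\epsilon)\epsilon\,r^{\epsilon-k},$$
which is strictly negative for $k\ge 3$ and $0<\epsilon<k-2$. Writing $\Delta_g-\hat\Delta$ in local coordinates in terms of $g^{ij}-\hat g^{ij}=O(r^{-\tau})$ and the difference of Christoffel symbols $\Gamma(g)-\Gamma(\hat g)=O(r^{-\tau-1})$, both coming from the decay of $h$ and $\hat\nabla h$, a direct calculation on $r^{2-k+\epsilon}$ gives
$$\bigl|(\Delta_g - \hat\Delta)\, r^{2-k+\epsilon}\bigr| = O(r^{\epsilon-k-\tau}).$$
Since $\tau>0$, the model term dominates, so there is a compact $K'\supset K\cup\{p\}$ such that on $\Omega:=M\setminus K'$
$$\Delta_g \,r^{2-k+\epsilon} \;\le\; \tfrac12(2-k+\epsilon)\epsilon\,r^{\epsilon-k} \;<\;0.$$

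\textbf{Maximum principle on the end.} Take $G_g$ to be the minimal positive Green's function provided by Proposition \ref{Greenexist}, constructed as a monotone limit of Dirichlet Green's functions on an exhaustion. A standard barrier argument in the asymptotic geometry $\mathbb R^{k}\times X$, using $r^{2-k}$ as a positive harmonic function vanishing at infinity, shows that $G_g(\cdot,p)\to 0$ at infinity. Thus on $\Omega$ the function $u:=G_g(\cdot,p)$ is continuous and positive on $\overline\Omega$ (since $p\in K'$), harmonic, and vanishes at infinity. Choose $C>0$ large enough that $u \le C r^{2-k+\epsilon}$ on the compact set $\partial K'$, which is possible because $r$ is bounded below there. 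Then $w:=Cr^{2-k+\epsilon}-u$ is superharmonic on $\Omega$, nonnegative on $\partial K'$, and tends to $0$ at infinity. On each compact subdomain $\Omega_{R''}:=\Omega\cap\{r\le R''\}$ the minimum principle gives
$$\min_{\overline{\Omega_{R''}}} w \;\ge\; \min_{\partial\Omega_{R''}} w.$$
Since $\{r=R''\}\times X$ is compact and both $u$ and $Cr^{2-k+\epsilon}$ decay uniformly there, the infimum over this outer boundary tends to $0$ as $R''\to\infty$, forcing $w\ge 0$ throughout $\Omega$. This is precisely the asserted bound $G_g((x,y),p)\le Cr^{2-k+\epsilon}$.

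\textbf{Main obstacle.} The only delicate ingredient is verifying that the minimal positive Green's function vanishes at infinity; granted this, the argument is an explicit calculation of $\Delta_g r^\alpha$ combined with a routine maximum principle. The vanishing can be established by the same barrier philosophy used above: the Dirichlet Green's functions on an exhausting sequence are uniformly dominated on the end by a constant multiple of the model harmonic function $r^{2-k}$, so the bound is inherited by the monotone limit $G_g$. Controlling the error term $(\Delta_g - \hat\Delta)r^{2-k+\epsilon}$ is bookkeeping in the spirit of asymptotic analysis on asymptotically Euclidean manifolds, made possible here by the three derivative decay assumptions on $h$.
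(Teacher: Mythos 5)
Your proposal is correct and follows essentially the same route as the paper: the barrier $r^{2-k+\epsilon}$ with $\Delta_g r^{2-k+\epsilon}=-\epsilon(k-2-\epsilon)r^{\epsilon-k}+O(r^{\epsilon-k-\tau})\le 0$ outside an enlarged compact set $K'$, a constant $C$ chosen to dominate $G_g(\cdot,p)$ on $\partial K'$, and the maximum principle on the end. Your additional care (restricting to $0<\epsilon<k-2$ and justifying the comparison at infinity via the exhaustion by Dirichlet Green's functions) only fills in steps the paper leaves implicit.
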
  %$(x,y)\neq p$ and some  since 
\begin{proof} For any $\epsilon>0$, note that 
\begin{eqnarray*}
\Delta_{g}r^{2-k+\epsilon}((x,y))&=&\Delta_{\hat{g}}r^{2-k+\epsilon}+(\Delta_{g}-\Delta_{\hat{g}})r^{2-k+\epsilon}\\
&=&-\epsilon(k-2-\epsilon)r^{-k+\epsilon}+O(r^{-k-\tau+\epsilon}),       
\end{eqnarray*}
It follows that there exist a larger compact set $K'\supset K$ such that when $(x, y)$ is outside $K'$,  $$\Delta_{g}r^{2-k+\epsilon}((x,y))\leq0.$$
Then we can choose $C>0$ such that 
$$Cr^{2-k+\epsilon}|_{\partial K'}\geq G_{g}((x,y),p)|_{\partial K'}.$$
By the maximal principle, 
$$
  G_{g}((x,y),p)\leq Cr^{2-k+\epsilon}((x,y))
$$
outside $K'$.
\end{proof}

In what follows, for $x, x_0 \in \mathbb R^k$, we denote by $\|x-x_0\|$ their Euclidean distance.
\begin{proposition}\label{Greenprop}
   The Green's function $G_{g}$ has the expansion, when $\|x-x_{0}\|$ is large and $(x_{0},y_{0})\in M\backslash K$,
   $$G_{g}((x,y),(x_{0},y_{0}))=\frac{C}{\|x-x_{0}\|^{k-2}}+O(\frac{1}{\|x-x_{0}\|^{\min\{k-1,k-2+\tau\}}})$$
   for $C>0$. Moreover 
   $$\|x-x_{0}\|^{k-2}\cdot \|\nabla G_{g}((x,y),(x_{0},y_{0}))\|\to 0\quad \text{as}\quad\|x-x_{0}\| \to\infty.$$
\end{proposition}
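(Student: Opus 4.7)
The plan is to compare $G_g$ with the Green function $\hat G$ of the model product metric $\hat g = g_{\mathbb{R}^k}+g_X$ on $\mathbb{R}^k\times X$. I would first construct $\hat G$ by separation of variables in the $X$-factor: if $\{\phi_i\}$ is an $L^2$-orthonormal eigenbasis of $-\Delta_X$ with eigenvalues $0=\lambda_0<\lambda_1\leq\cdots$, then
\[
\hat G((x,y),(x_0,y_0))=\sum_{i\ge 0} G^{(i)}(x-x_0)\,\phi_i(y)\phi_i(y_0),
\]
with $G^{(i)}$ the Green kernel of $-\Delta_{\mathbb{R}^k}+\lambda_i$ on $\mathbb{R}^k$. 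The $i=0$ term is the Newtonian kernel $c_k\operatorname{Vol}(X)^{-1}\|x-x_0\|^{-(k-2)}$, while each $i\ge1$ term is a Yukawa kernel decaying like $e^{-\sqrt{\lambda_i}\|x-x_0\|}$ with all derivatives (uniformly in $y,y_0$ after elliptic estimates on $X$). Summing gives $\hat G = C_0\|x-x_0\|^{-(k-2)} + O(e^{-c\|x-x_0\|})$ with $C_0>0$, together with analogous expansions for $\nabla\hat G$ and $\nabla^2\hat G$.

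Next I would transfer $\hat G$ to the end $M_\infty$ via the asymptotic identification. Since $g$ and $\hat g$ agree to order $O(r^{-\tau})$, the local singularities of $G_g$ and $\hat G$ at $(x_0,y_0)$ coincide, so $u:=G_g-\hat G$ extends smoothly across $(x_0,y_0)$ and is smooth throughout $M_\infty$. It satisfies
\[
\Delta_g u = -(\Delta_g-\Delta_{\hat g})\hat G
\]
on $M_\infty$, and using $h=O(r^{-\tau})$, $\hat\nabla h = O(r^{-\tau-1})$ together with the derivative bounds on $\hat G$ from Step 1, the right-hand side is $O(r^{-k-\tau})$.

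The decay of $u$ is then obtained via Green's integral representation combined with a region-splitting estimate. The preceding lemma already provides the rough bound $u=O(r^{-(k-2)+\epsilon})$. Writing
\[
u(x,y)=\int_M G_g((x,y),(z,w))\,\bigl[(\Delta_g-\Delta_{\hat g})\hat G\bigr](z,w)\,dV_g+\mathcal{B}_\infty(x,y),
\]
where $\mathcal{B}_\infty$ is the limit of boundary integrals over large coordinate spheres, the boundary term, after expanding $\|x-x_0\|^{-(k-2)}$ in harmonic polynomials at infinity, contributes a further monopole $C'\|x-x_0\|^{-(k-2)}$ plus an $O(\|x-x_0\|^{-(k-1)})$ remainder; the volume integral is $O(\|x-x_0\|^{-(k-2+\tau)})$ after splitting the integration into $\{\|z\|<\|x\|/2\}$, the middle annulus, and $\{\|z\|>2\|x\|\}$ and using $G_g((x,y),(z,w))\lesssim \|x-z\|^{-(k-2)}$ (from the rough lemma together with a Harnack step) combined with the $O(r^{-k-\tau})$ decay of the source. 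Setting $C=C_0+C'$ gives the claimed expansion, and $C>0$ because $G_g>0$ while the error is strictly lower order. The gradient statement then follows by differentiating the representation and invoking the scale-invariant elliptic gradient bound $|\nabla_x G_g((x,y),(z,w))|=O(\|x-z\|^{-(k-1)})$, obtained by rescaling on annuli $\{\|z-x\|\sim R\}$ and applying interior estimates to the positive harmonic function $G_g$.

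The main obstacle I expect is the sharp decay in this last step: the weight $r^{-(k-2+\tau)}$ is subharmonic (not superharmonic) for the model Laplacian, so one cannot bound $u$ by a single power-weight barrier. The exponential decay of the nonzero $X$-modes in the Step-1 expansion of $\hat G$ is precisely what separates the polynomial contributions from the rest and allows the two independent error orders $k-1$ and $k-2+\tau$ to emerge cleanly.
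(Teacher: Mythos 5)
Your route (explicit model kernel $\hat G$ by $X$-mode decomposition, then a perturbative comparison $u=G_g-\hat G$) is genuinely different from the paper's, but the key quantitative step fails as written. With only the ingredients you allow yourself --- the rough bound $G_g((x,y),(z,w))\lesssim \|x-z\|^{2-k}$ and the source decay $O(\rho(z)^{-k-\tau})$ --- the inner region $\{\|z\|\le \|x\|/2\}$ of your volume integral is bounded by $\|x\|^{2-k}\int_1^{\|x\|/2}s^{-1-\tau}\,ds=O(\|x\|^{2-k})$, i.e.\ it lands at \emph{monopole} order, not at the claimed order $O(\|x\|^{-(k-2+\tau)})$. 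So the error exponent $\min\{k-1,k-2+\tau\}$ does not follow: the bulk term renormalizes the coefficient of $\|x-x_0\|^{2-k}$, and to split it into a monopole plus an $O(\|x\|^{2-k-\tau})$ remainder you would need the large-$\|x\|$ expansion of $G_g((x,y),(z,w))$ for fixed interior $(z,w)$ --- which is the statement being proved, so the argument is circular unless you set up an explicit bootstrap. Relatedly, attributing the extra monopole $C'\|x-x_0\|^{2-k}$ to the boundary term at infinity is backwards: with the rough decay $O(r^{2-k+\epsilon})$ those boundary integrals vanish in the limit, and the monopole correction comes precisely from the bulk integral you are trying to discard.

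There is also a problem at the pole: since $g$ and $\hat g$ agree only to order $O(r^{-\tau})$ (and at the fixed pole $h$ is a small but nonzero tensor), $u=G_g-\hat G$ is not smooth across $(x_0,y_0)$ --- only the leading $d^{2-n}$ singularities cancel --- and the source $(\Delta_g-\Delta_{\hat g})\hat G\sim |h|\,|\hat\nabla^2\hat G|\sim d^{-n}$ is borderline non-integrable there, so your representation formula needs a cutoff near the pole before it even makes sense. This is exactly how the paper sidesteps both issues: it multiplies $G_g$ by a cutoff vanishing near the pole, so the new right-hand side is compactly supported (hence lies in every weighted space), and then reads the asymptotics directly from Minerbe's weighted-space expansion (Proposition \ref{gradientproposition}) between two noncritical weights chosen from the rough bound $O(r^{2-k+\epsilon})$, where only the $j=0,1$ decaying modes can appear; Moser iteration controls the remainders and, after differentiating the equation, the gradient. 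If you wish to keep the model-kernel comparison, you must both cut off at the pole and iterate (first upgrade the rough bound to a sharp $O(r^{2-k})$, then rerun the splitting with the improved kernel information). Finally, positivity of $G_g$ alone only gives $C\ge 0$; a short additional barrier or Harnack argument is needed for $C>0$ (the paper is also terse on this point).
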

Before we give the proof of the proposition, we recall some definitions; see \cite[p.930]{Minerbe1}. Let $K$ be compact set such that $M\backslash K$ is diffeomorphic to $(\mathbb{R}^{k}\backslash B_{R}(0))\times X$ and $r$ as before. Then for $\delta \in \mathbb R$, the weighted $L^2$-space is
$$L^{2}_{\delta}(\Omega)=\left\{u\in L_{loc}^2\left|\int_{\Omega\backslash K}u^2r^{-2\delta}d\operatorname{vol}_{\hat{g}}<\infty\right.\right\}$$
with the norm
$$\|u\|_{L_{\delta}^2(\Omega)}=\left(\int_{\Omega\cap K}u^2d\operatorname{vol}_{g}+\int_{\Omega\backslash K}u^2r^{-2\delta}d\operatorname{vol}_{\hat{g}}\right)^{\frac{1}{2}}.$$
Any $u\in L_{loc}^{2}(M\backslash K)$ can be written as $u=\Pi_{0}u+\Pi_{\perp}u$, where 
$$(\Pi_{0}u)(x)=\frac{1}{\operatorname{Vol}_{X}}\int_{X}u(x,y)d\operatorname{vol}_{y},\quad \Pi_{\perp}u=u-\Pi_{0}u.$$
Then, for any $\delta, \epsilon \in \mathbb R$, 
     $$L^{2}_{\delta,\epsilon}(\Omega)=\left\{u\in L_{loc}^2\left|\|\Pi_{0} u\|_{L_{\delta}^2(\Omega\backslash K)}<\infty\;\text{and}\; \|\Pi_{\perp} u\|_{L_{\epsilon}^2(\Omega\backslash K)}<\infty\right.\right\}$$
     with the norm      
     $$\|u\|_{L_{\delta,\epsilon}^2(\Omega)}=\left(\|u\|_{L_{\delta}^2(K\cap\Omega)}+\|\Pi_{0} u\|_{L_{\delta}^2(\Omega\backslash K)}+\|\Pi_{\perp} u\|_{L_{\epsilon}^2(\Omega\backslash K)}\right)^{\frac{1}{2}}.$$
     Finally we define the Sobolev space
     \begin{eqnarray*}
         H_{\delta}^2&=&\left\{u\in H_{loc}^{2}\left|\|\nabla^{g}d\Pi_{0} u\|_{L_{\delta-2}^2(K^{c})}+\|d\Pi_{0} u\|_{L_{\delta-1}^2(K^{c})}+\|\Pi_{0} u\|_{L_{\delta}^2(K^{c})}<\infty\right.\right.\\
         &&\quad\text{and}\;\left.\|\nabla^{g}d\Pi_{\perp} u\|_{L_{\delta-2}^2(K^{c})}+\|d\Pi_{\perp} u\|_{L_{\delta-2}^2(K^{c})}+\|\Pi_{\perp} u\|_{L_{\delta-2}^2(K^{c})}<\infty\right\}.
     \end{eqnarray*}
%For the asympototic metric $g$, by Proposition 3 in \cite{Minerbe1}, the green operator of $G_{g}$ and $G_{\hat{g}}$ are %related by
%   \begin{equation}\label{Greenrelationship}
%       G_{g}=(Id+G_{\hat{g}}(\Delta_{g}-\Delta_{\hat{g}}))^{-1}G_{\hat{g}}=G_{\hat{g}}(Id+(\Delta_{g}-\%Delta_{\hat{g}})G_{\hat{g}})^{-1}
  % \end{equation}
   %  $$G_{g}:L^{2}_{\delta-2}(B_{R_{0}}^{c})\to H^{2}_{\delta}(B_{R_{0}}^{c}),$$ where $B_{R}=K\cup (\{r\leq R\}\times X)$. Which indicates the relationship of the Green function of the Laplace $\Delta_{g}$ with the Green function $\Delta_{\hat{g}}$ outside a compact set.  Here we have use the same letters to denote the green function and green operator.

 Before we go into the proof of Proposition \ref{Greenprop}, we also need the following proposition from \cite{Minerbe1} which we quote here for convenience. The proposition gives asymptotic behavior of solutions to $\Delta_{g}u=f$ in terms of that of $f$ as well as those of harmonic functions on $\mathbb R^k$ expressed in the spectral decomposition of the Laplace operator $\Delta_{S}$ on the unit sphere $S^{k-1}$. 
 
 Recall that the eigenvalues of $\Delta_{S}$ are $\lambda_{j}=j(k-2+j), j\in \mathbb N$. Let $E_{j}$ be the eigenspace of $\Delta_{S}$ with eigenvalues $\lambda_{j}=j(k-2+j)$. Let $\delta_{j}=\frac{k}{2}+j, j\in \mathbb N$ and we call $\delta \in \mathbb R$ noncritical if $\delta\not=\delta_j$ and $\delta \not= 2-\delta_j$, for any $j\in \mathbb N$.
 
%   to estimate the order of the gradient of $G$ and the positive constant coefficient of the order $r^{2-k}$ term  when $d(x,y)$ large.
   \begin{proposition}[Proposition 4 in \cite{Minerbe1}]\label{gradientproposition}
Suppose $\Delta_{g}u=f$ with $u$ in $L_{\delta}^{2}(K^{c})$ and $f$ in $L_{\delta'-2}^{2}(K^{c})$ for $K$ compact and noncritical exponents $\delta>\delta'$. Then, up to enlarging $K$, there is an element $v$ of $L^{2}_{\delta',\delta'-2}(K^{c})$ such that $u-v$ is a linear combination of the following function:
\begin{enumerate}
  \item $\aleph^{+}_{j,\phi_{j}}$ with $\phi_{j}$ in $E_{j}$, if $\delta'<\delta_{j}<\delta$;
  \item $\aleph^{-}_{j,\phi_{j}}$ with $\phi_{j}$ in $E_{j}$, if $\delta'<2-\delta_{j}<\delta$.
\end{enumerate}
where $\aleph^{\pm}_{j,\phi_{j}}=\Phi (r^{\nu_{j}^{\pm}}\phi_{j}+v_{j}^{\pm})$ with $\nu_{j}^{+}=j$, $\nu_{j}^{-}=2-k-j$.  $\Phi$ is a smooth cut-off function which is $0$ on a compact set and $1$ on $K$. Moreover, $\Delta_{g} v^{\pm}_{j}=-\Delta_{g}(r^{\nu_{j}^{\pm}}\phi_{j})$.
   \end{proposition}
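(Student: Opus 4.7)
The plan is to first prove the statement for the product metric $\hat g = g_{\mathbb R^k} + g_X$ by spectral decomposition and separation of variables, and then to reinstate the perturbation $h$ by an iterative argument that exploits the decay $\hat\nabla^\ell h = O(r^{-\tau-\ell})$.

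First I would split along the fiber: let $\{\psi_m\}_{m\geq 0}$ be an $L^2(X)$-orthonormal eigenbasis of $\Delta_X$ with eigenvalues $0 = \mu_0 < \mu_1 \leq \mu_2 \leq \cdots$, and write $u = \Pi_0 u + \Pi_\perp u$ and $f = \Pi_0 f + \Pi_\perp f$. Because $\Delta_{\hat g}$ commutes with $\Pi_0$, the model equation decouples into $\Delta_{\mathbb R^k}(\Pi_0 w) = \Pi_0 F$ on $\mathbb R^k$ and, mode by mode, $(\Delta_{\mathbb R^k} - \mu_m)(\Pi_\perp w)_m = (\Pi_\perp F)_m$ for $m \geq 1$. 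The perpendicular modes have positive mass $\mu_m \geq \mu_1 > 0$, so each $\Delta_{\mathbb R^k} - \mu_m$ is invertible on polynomially weighted $L^2$, with Green's function decaying exponentially in $|x|$. This produces, from $\Pi_\perp f \in L^2_{\delta'-2}$, a $\Pi_\perp v$ with two derivatives bounded in the same weight (summable in $m$ by standard elliptic regularity and Agmon-type resolvent bounds), contributing the $\Pi_\perp v \in L^2_{\delta'-2}$ piece and introducing no asymptotic modes.

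For the zero mode I would further expand $\Pi_0 u = \sum_j u_j(r)\phi_j(\omega)$ with $\phi_j \in E_j$, reducing the problem to the Euler-type ODEs
\begin{equation*}
u_j''(r) + \frac{k-1}{r}\,u_j'(r) - \frac{j(k-2+j)}{r^2}\,u_j(r) = f_j(r),
\end{equation*}
whose homogeneous solutions are $r^j$ and $r^{2-k-j}$, with $L^2_\delta$-indicial roots $\delta_j = \tfrac{k}{2}+j$ and $2-\delta_j$. Variation of parameters yields a particular solution $v_j^{\mathrm{part}} \in L^2_{\delta'}$ whenever $\delta'$ is noncritical, and the residual $u_j - v_j^{\mathrm{part}}$ is a homogeneous solution which, combined with $u_j \in L^2_\delta$ and noncriticality of $\delta$, is forced to be a linear combination of precisely those $r^{\nu_j^\pm}\phi_j$ whose weight lies in $(\delta', \delta)$. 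Assembling the perpendicular and zero-mode pieces yields the decomposition for the model metric $\hat g$, with $v \in L^2_{\delta', \delta'-2}$ and leading terms of the form $r^{\nu_j^\pm}\phi_j$.

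Finally I would reinstate the perturbation. Because $\hat\nabla^\ell h = O(r^{-\tau-\ell})$, the difference operator $P := \Delta_g - \Delta_{\hat g}$ has coefficients decaying like $r^{-\tau}$ and $r^{-\tau-1}$ and therefore sends $H^2_\mu$ into $L^2_{\mu-2-\tau}$, a genuine gain of $\tau > 0$ in weight. Applying the model-case inversion to $-P(r^{\nu_j^\pm}\phi_j)$ produces a first corrector; iterating this construction through the sequence of weights $\nu_j^\pm - \ell\tau$ (each step strictly better, stopping once the residual falls below $\delta' - 2$) yields $v_j^\pm$ with $\Delta_g(r^{\nu_j^\pm}\phi_j + v_j^\pm) \in L^2_{\delta'-2}$ and with $v_j^\pm$ strictly subleading relative to $r^{\nu_j^\pm}\phi_j$. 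For the original $u$, one applies the model inversion to $f$ first and then absorbs the resulting asymptotic modes into the $\aleph^\pm_{j,\phi_j}$ via these correctors, producing the claimed decomposition $u - v = \sum c_{j,\pm}\aleph^\pm_{j,\phi_j}$. The hard part will be this perturbative step: one must arrange all intermediate weights to be noncritical so that the model Fredholm theory applies at every stage, and must track the coupling between different spherical modes $j$ that $P$ induces, in order to ensure the iteration terminates in finitely many steps with a corrector genuinely smaller than the leading term.
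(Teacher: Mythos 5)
This proposition is quoted from Minerbe (Proposition 4 of the cited paper); the present paper supplies no proof of its own, only the remark that Minerbe's argument for $X=S^1$ carries over to general $X$ unchanged. Your outline --- the $\Pi_0/\Pi_\perp$ fiber splitting with exponential decay killing asymptotic modes in the nonzero fiber modes, the spherical-harmonic/Euler ODE analysis with indicial roots $\nu_j^{+}=j$, $\nu_j^{-}=2-k-j$ and critical weights $\delta_j$, $2-\delta_j$ for the zero mode, and the perturbative bootstrap using the weight gain $\tau$ of $\Delta_g-\Delta_{\hat g}$ to produce the correctors $v_j^{\pm}$ --- is essentially Minerbe's proof, i.e.\ the same approach the paper relies on.
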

   
 \begin{remark}
   In \cite{Minerbe1}, the author deals explicitly with the case when $X=S^{1}$. But the method and result in \cite{Minerbe1} generalize to the general $X$ without change.
   \end{remark}
   
\begin{proof}[Proof of Proposition \ref{Greenprop}]
   For fixed $y\in M$, consider the equation,  %\textcolor{red}{Here we need to know the order of $\chi(G_{g}-G_{\hat{g}})$!}
   \begin{eqnarray}\label{equation18}
   % \nonumber % Remove numbering (before each equation)
     \Delta_{g}[\chi(x)G_{g}] &=&2\langle\nabla_{g} G_{g},\nabla_{g}\chi(x)\rangle +G_{g}\Delta_{g}\chi(x)
   \end{eqnarray}
   where $\chi(x)$ is a cut-off function which is $0$ in $B_{y}(r_{1})$ and $1$ in $B^{c}_{y}(r_{2})$ for $r_{2}>>r_{1}$.
    The order of the right hand term of equation (\ref{equation18}) is arbitrary, hence it lies in $L_{\delta'-2}^2(B_{R_{0}}^{c})$ especially for any $\delta'>-\frac{k}{2}$. Also $\chi(x)G_{g}\in L_{\delta}^2(B_{R_{0}}^{c})$ for any $\delta>-\frac{k}{2}+2+\epsilon$ by \eqref{Greenorder}. Thus we can choose $\delta,\delta'$ such that only $\delta'<2-\delta_{j}<\delta$ is possible for some $j$. Thus, by Proposition \ref{gradientproposition}, we have, 
    $$\chi(x)G_{g}=v+\sum_{j}\aleph^{-}_{j,\phi_{j}}.$$
    where $v\in L^2_{\delta',\delta'-2}(B_{R_{0}^{c}})$, the sum runs over $j\in \mathbb N$ such that $\delta'< 2-\delta_{j}<\delta$, and 
    $$\Delta_{g}v=2\langle\nabla_{g} G_{g},\nabla_{g}\chi(x)\rangle +G_{g}\Delta_{g}\chi(x).$$
     Using Moser iteration as in the proof of Lemma 6 in \cite{Minerbe1}, we have $v=O(r^{-\frac{k}{2}+\delta'})$ for (possibly different but still arbitrary) $\delta'>-\frac{k}{2}$.
     Since $\delta>-\frac{k}{2}+2+\epsilon$ and $\delta'>-\frac{k}{2}$ are arbitrary. we choose $\varepsilon'>0$ such that $\delta'=-\frac{k}{2}+\varepsilon'$ and $\delta=-\frac{k}{2}+2+\varepsilon+\varepsilon'$ and $1>\varepsilon+\varepsilon'>0$. Then the only $j\in \mathbb{N}$ satisfying 
     $$-\frac{k}{2}+\varepsilon'<2-\frac{k}{2}-j<-\frac{k}{2}+2+\varepsilon+\varepsilon'.$$
     are $j=0, 1$. Thus $\aleph^{-}_{j,\phi_{j}}=\Phi (r^{2-k-j}\phi_{j}+v_{j}^{-})$, where 
     %$$2-\varepsilon'>j>-\varepsilon'-\varepsilon.$$
     On the other hand, $\Delta_{g} v^{-}_{j}=-\Delta_{g}(r^{\nu_{j}^{-}}\phi_{j})$, $\Delta_{g}(r^{\nu_{j}^{-}}\phi_{j})\in L^2_{-\eta-\tau}$ with $\eta>\delta_{j}$  and thus we can find $v^{-}_{j}\in H^2_{2-\eta-\tau}$ again as in  the proof of Lemma 6 in \cite{Minerbe1}.  Therefore we have $v_{j}^{-}=O(r^{2-k-\tau})$.
     Hence, 
     \begin{eqnarray}
     % \nonumber % Remove numbering (before each equation)
       \chi(x)G_{g} &=& O(\frac{1}{r^{\min{\{k-1,k+\tau-2\}}}})+\frac{C\Phi}{r^{k-2}}.
     \end{eqnarray}
     Since we can differentiate the both sides of (\ref{equation18}) and use Proposition \ref{gradientproposition} again and repeat the above process, we also get $\|\nabla G_{g}\|=O(\frac{1}{r^{k-1}})$ for $r>>1$. By the positivity of $G_{g}$,
     \begin{eqnarray*}
     % \nonumber % Remove numbering (before each equation)
       G_{g} &=& \frac{C}{r^{k-2}}+O(\frac{1}{r^{\min\{k-1,k-2+\tau\}}})
     \end{eqnarray*}
     for $C>0$ and $r>>1$.
\end{proof}
%   Here is a example about the green function of this property. In \cite{4}, page 193, the equations $(17.7)$ and $(17.8)$ indicate that, for symmetry metric $g$ on $\mathbb{R}^{n}(n\geq 3)$, which is also asymptotic to the standard Euclidean metric $g_{\mathbb{R}^{n}}$, the Green function can be expressed as
%   $$G(x,y)=\int_{r_{x}(y)}^{\infty}\frac{1}{A_{x}(t)}dt$$
%   if $\int_{1}^{\infty}\frac{1}{A_{x}(t)}dt< \infty$, where $A_{x}(t)$ is the area element of distant $t$ to point $x$ with respect to the metric $g$. Thus for this case,
%   $$\int_{1}^{\infty}\frac{1}{A_{x}(t)}dt=\int_{1}^{s}\frac{1}{A_{x}(t)}dt+\int_{s}^{\infty}\frac{1}{A_{x}(t)}dt<\infty$$
%   and
%   $$\frac{C_1}{\hat{A}_{x}(t)}\leq \frac{1}{A_{x}(t)}\leq \frac{C_2}{\hat{A}_{x}(t)},$$
%   where $\hat{A}_{x}(t)$ is the area element of distant $t$ to point $x$ with respect to the metric $g_{\mathbb{R}^{n}}$.
%   Thus the Green function can be expressed as
%   $$G(x,y)=\int_{r_{x}(y)}^{\infty}\frac{1}{A_{x}(t)}dt$$
%   and we can see that the order of $G$ about the green function is $r^{2-n}$ and the order of $\nabla G$ is $r^{1-n}$.

\section{The compactification} \label{compact}

Let $(M, g)$ be a manifold asymptotic to $\mathbb R^k \times X$ whose scalar curvature is nonnegative but its mass is negative. 
Following the general strategy of \cite{1}, we compactify $(M,g)$ by cutting $M$ off a large compact set $K$ such that $\partial K=(\partial[0,1]^{k})\times X$ and gluing the opposite faces of $[0,1]^{k}$, with the resulting manifold $(M_{1}^{n}\#_{X}T^{k}\times X,g)$. In order for $(M_{1}^{n}\#_{X}T^{k}\times X,g)$ to still have a metric  with positive scalar curvature we deform $g$ so that it is the product of Euclidean metric with $g_X$ outside a compact set while still maintains nonnegative scalar curvature. 

There are two key steps.
\begin{itemize}
  \item Step 1 If $(M,g)$ is asymptotically $(\mathbb R^k \times X,g_{\mathbb{R}^{k}}+g_{X})$ and $Sc_g\geq 0$ but $m(g)<0$, then there is a  metric      $\tilde{g}=\tilde{u}^{\frac{4}{n-2}}(g_{\mathbb{R}^{k}}+g_{X})$ with $\tilde{u}=1+\frac{\tilde{m}}{r^{k-2}}+O(r^{1-k})(\tilde{m}<0)$ and $Sc_{\tilde{g}}\geq 0$, $Sc_{\tilde{g}}=0$  outside a large compact set.
  \item step 2 This is an observation due to J.Lohkamp. If $(M,g)$ with $g=u^{\frac{4}{n-2}}(g_{\mathbb{R}^{K}}+g_{X})$ and $u=1+\frac{m}{r^{k-2}}+O(r^{1-k})(m<0)$ and $Sc_g \geq 0$, $Sc_{g}=0$  outside a large compact set, then there exists a metric $\tilde{g}$ with $Sc(\tilde{g})\geq 0$ and $\tilde{g}=g_{\mathbb{R}^k}+g_{X}$ near $\infty$.
\end{itemize}
We first prove the Sobolev inequality on $(M,g)$ which is used in the rest part of the paper (as well as in the previous part where Moser iteration is involved). Recall that $M\backslash K \simeq (\mathbb R^k - B_R(0))\times X$. For $r>R$, let 
$B_{r}=K\cup (B_{r}(0)\backslash B_{R}(0))\times X$. 
\begin{lemma}
  There is a Sobolev constant $c>0$ not depending on $B_{r}$ such that, for
  $f\in C^{\infty}_{0}(B_{r})$, 
  \begin{eqnarray}
  \left(\int_{B_{r}}f^{\frac{2n}{n-2}}d\operatorname{vol}_{g}\right)^{\frac{n-2}{2n}}\leq c\left(\int_{B_{r}}|\nabla f|^2d\operatorname{vol}_{g}\right)^{\frac{1}{2}}. 
  \end{eqnarray}
\end{lemma}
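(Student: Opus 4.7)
The plan is to reduce the Sobolev inequality on $(M,g)$ to two ingredients patched together by a fixed cut-off: the standard Sobolev inequality on a fixed precompact region of $M$ containing the core $M_0$, and a uniform Sobolev inequality on the model end $(\mathbb R^k\times X, \hat g)$. Once these two are available with constants depending only on fixed data, the gluing argument produces a single constant $c$ independent of $r$.

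Concretely, I would fix a smooth cut-off $\chi$ on $M$ with $\chi \equiv 1$ on an enlarged compact set $M_0'\supset M_0$ and $\chi \equiv 0$ outside $\{r\leq R'\}\times X$ for some large $R' > R$, and decompose $f = \chi f + (1-\chi)f$. By the triangle inequality in the target $L^{\frac{2n}{n-2}}$ it suffices to bound each piece separately with constants depending only on $\chi$. The piece $\chi f$ is supported in the fixed precompact domain $\{\chi\neq 0\}$, where the usual smooth-domain Sobolev inequality gives the estimate with a constant depending only on $(M_0',g)$. For $(1-\chi)f$, supported in the end $M_\infty$, the decay $h = O(r^{-\tau})$ ensures that $g$ and $\hat g$ are uniformly bi-Lipschitz there (after possibly enlarging $R'$), so the Sobolev inequalities for the two metrics differ by a fixed multiplicative constant; extending by zero, the estimate reduces to the analogous inequality on $(\mathbb R^k\times X, \hat g)$ for compactly supported functions.

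The main step is then to establish a uniform Sobolev inequality on the model $(\mathbb R^k\times X, \hat g)$, which I would do by applying the sharp Euclidean Sobolev inequality on the $\mathbb R^k$ factor (using $k\geq 3$) fiberwise over $y\in X$, and then integrating in the compact $X$ direction, combined with a H\"older or interpolation step against the compact Sobolev embedding on $X$ to land on the required exponent. Translation invariance of $\hat g$ in the $\mathbb R^k$ factor guarantees the constant is independent of the support size. Combining the two pieces and absorbing the cross-term coming from $\nabla \chi$ via a local Poincar\'e inequality on the bounded overlap yields the desired bound on $(M,g)$, with a constant depending only on $\chi$, $M_0'$, and the model Sobolev constant, none of which depend on $r$.

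The main technical obstacle I anticipate is the last step on the model space: the naive Euclidean Sobolev inequality on $\mathbb R^k$ naturally controls $L^{\frac{2k}{k-2}}$, whereas the target is $L^{\frac{2n}{n-2}}$, so the interplay with integration over the compact fiber $X$ and the absence of an $L^2$ correction on the right-hand side must be set up carefully; once this is done, the bi-Lipschitz comparison and the cut-off glue proceed routinely.
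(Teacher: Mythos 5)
Your gluing framework (cut-off decomposition of $f$, standard Sobolev on a fixed precompact piece, bi-Lipschitz comparison of $g$ and $\hat g$ on the end since $h=O(r^{-\tau})$) is fine and is in the same spirit as the paper, which also works with $\hat g$ on $B_r\setminus K$ and treats $K$ by a cut-off. The paper differs on the end: instead of splitting only the domain, it splits the function, $f=\Pi_0 f+\Pi_\perp f$ with $\Pi_0 f$ the fiber average over $X$, estimating the fluctuation $\Pi_\perp f$ by the gradient along $X$ and the average $\Pi_0 f$ by the gradient along $\mathbb{R}^k$.

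However, the step you yourself flag as the ``main technical obstacle'' is not a technicality that careful set-up can remove: the uniform model inequality you need, $\|u\|_{L^{2n/(n-2)}(\mathbb{R}^k\times X,\hat g)}\le C\|\nabla u\|_{L^2}$ for all compactly supported $u$, is false once $\dim X\ge 1$. Take $\phi\in C_0^\infty(\mathbb{R}^k)$ supported in a fixed annulus and set $u_R(x,y)=\phi(x/R)$, constant in $y$; then $u_R\in C_0^\infty(B_{3R}\setminus K)$ for large $R$, the left-hand side grows like $R^{k(n-2)/(2n)}$ while the right-hand side grows like $R^{(k-2)/2}$, and $k(n-2)/(2n)>(k-2)/2$ precisely when $n>k$. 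Equivalently, a pure $L^{2n/(n-2)}$-Sobolev inequality forces Euclidean volume growth $\operatorname{Vol}B(x,r)\ge c\,r^{n}$, whereas $\mathbb{R}^k\times X$ only grows like $r^k$. No H\"older or interpolation step against the compact embedding on $X$ can repair this, because the offending functions are exactly those constant on the fiber, for which the fiber direction contributes nothing; and since $g$ is uniformly equivalent to $\hat g$ on the end, these test functions are admissible for the lemma itself, so the obstruction is to the statement with a constant independent of $r$, not just to your route. (For what it is worth, the paper's own proof hits the same point: its estimate for the fiber-average part asserts exactly the inequality that the functions $u_R$ violate.) To get a correct uniform statement one must change the inequality — e.g.\ allow an additional $\|f\|_{L^2}$ term, use the end exponent $2k/(k-2)$ for the fiber-mean part, or pass to weighted Sobolev inequalities adapted to fibered ends as in Minerbe — and then adjust the places where the lemma is invoked accordingly.
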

\begin{proof}
   Write $f=\Pi_{0} f+\Pi_{\perp} f$ in $B_{r}\backslash K$ where $\Delta_{g_{X}}\Pi_{0} f=0$, $\Pi_{0} f=\frac{1}{\operatorname{Vol(X)}}\int_{X}fd\operatorname{vol}_{g_{X}}$ and $\Pi_{\perp}f=f-\Pi_{0} f$. Then
   \begin{eqnarray*}
\nabla_{g_{\mathbb{R}^{k}}}\Pi_{0}f&=&\nabla_{g_{\mathbb{R}^{k}}}\left(\frac{1}{\operatorname{Vol(X)}}\int_{X}fd\operatorname{vol}_{g_{X}}\right)\\
  &=&\frac{1}{\operatorname{Vol(X)}}\int_{X}\left(\nabla_{g_{\mathbb{R}^{k}}}f\right)d\operatorname{vol}_{g_{X}}\\
\nabla_{g_{X}}\Pi_{0}f&=&0,
   \end{eqnarray*}
   and
   \begin{eqnarray*}
  |\nabla_{g_{\mathbb{R}^{k}}}\Pi_{0}f|^2
  &=&\left|\frac{1}{\operatorname{Vol(X)}}\int_{X}\left(\nabla_{g_{\mathbb{R}^{k}}}f\right)d\operatorname{vol}_{g_{X}}\right|^2\\
  &\leq&\frac{1}{\operatorname{Vol(X)}}\int_{X}\left|\nabla_{g_{\mathbb{R}^{k}}}f\right|^2d\operatorname{vol}_{g_{X}}
   \end{eqnarray*}
   We only need to consider the metric $\hat{g}$ since it is equivalent to $g$ in $B_{r}\backslash K$. 
  Then, for some $c_{1}, c_{2}>0$
  \begin{eqnarray*}
  \left(\int_{B_{r}\backslash K}(\Pi_{0} f)^{\frac{2n}{n-2}}d\operatorname{vol}_{\hat{g}}\right)^{\frac{n-2}{2n}}&\leq& c_{1}\left(\int_{B_{r}\backslash K}|\nabla_{g_{\mathbb{R}^{k}}} \Pi_{0} f|^2d\operatorname{vol}_{\hat{g}}\right)^{\frac{1}{2}}\\\nonumber
  &\leq& c_{1}\left(\int_{B_{r}\backslash K}\left[\frac{1}{\operatorname{Vol(X)}}\int_{X}\left|\nabla_{g_{\mathbb{R}^{k}}}f\right|^2d\operatorname{vol}_{g_{X}}\right]d\operatorname{vol}_{\hat{g}}\right)^{\frac{1}{2}}\\
  &\leq&c_1\left(\int_{B_{r}\backslash K}\left|\nabla_{g_{\mathbb{R}^{k}}}f\right|^2d\operatorname{vol}_{\hat{g}}\right)^{\frac{1}{2}}\\
  &\leq&c_1\left(\int_{B_{r}\backslash K}\left|\nabla_{\hat{g}}f\right|^2d\operatorname{vol}_{\hat{g}}\right)^{\frac{1}{2}}
  \end{eqnarray*}
  \begin{eqnarray*}
  \left(\int_{B_{r}\backslash K}(\Pi_{\perp} f)^{\frac{2n}{n-2}}d\operatorname{vol}_{\hat{g}}\right)^{\frac{n-2}{2n}}&\leq& c_{2}\left(\int_{B_{r}\backslash K}|\nabla_{g_{X}} \Pi_{\perp} f|^2d\operatorname{vol}_{\hat{g}}\right)^{\frac{1}{2}}\\\nonumber
  &=& c_{2}\left(\int_{B_{r}\backslash K}|\nabla_{g_{X}}f|^2d\operatorname{vol}_{\hat{g}}\right)^{\frac{1}{2}}\\
  &\leq&c_2\left(\int_{B_{r}\backslash K}|\nabla_{\hat{g}}f|^2d\operatorname{vol}_{\hat{g}}\right)^{\frac{1}{2}}
  \end{eqnarray*}
  Let $\chi$ be a cut-off function which  is $1$ on $K$ and is $0$  outside a larger compact set $K'$, such that $K\subset K'=B_{r_{1}} \subset B_{r}$. Then
  \begin{eqnarray*}
  \left(\int_{K} f^{\frac{2n}{n-2}}d\operatorname{vol}_{g}\right)^{\frac{n-2}{2n}}&\leq& \left(\int_{K'} (\chi f)^{\frac{2n}{n-2}}d\operatorname{vol}_{g}\right)^{\frac{n-2}{2n}}\\
  &\leq& C_{K'}\left(\int_{K'} \|\nabla(\chi f)\|^{2}d\operatorname{vol}_{g}\right)^{\frac{1}{2}}\\
  &\leq& C_{K'}\left(\int_{K'} \|\nabla f)\|^{2}d\operatorname{vol}_{g}\right)^{\frac{1}{2}}+ C_{K'}\left(\int_{K'} \|f\nabla\chi \|^{2}d\operatorname{vol}_{g}\right)^{\frac{1}{2}}\\
  &\leq& C_{K'}\left(\int_{K'} \|\nabla f)\|^{2}d\operatorname{vol}_{g}\right)^{\frac{1}{2}}\\
  &&+ C_{K'}\left(\int_{K'\backslash K} \|f\|^{\frac{2n}{n-2}}d\operatorname{vol}_{\hat{g}}\right)^{\frac{n-2}{2n}}\left(\int_{K'\backslash K} \|\nabla\chi \|^{n}d\operatorname{vol}_{\hat{g}}\right)^{\frac{1}{n}}.
  \end{eqnarray*}
  Using the first part we get the result.
\end{proof}
\begin{proposition}\label{proposition1}
  Suppose $(M^{n},g)$ is an asymptotically flat manifold with asymptotic end $\mathbb{R}^{k}\times X$. Then there exists a constant $\epsilon_{0}=\epsilon_{0}(g)$, such that if $f$ is a smooth function with compact support and $\|f_{-}\|_{L^{\frac{n}{2}}}<\epsilon_{0}$, then the equation
   \begin{equation}\label{EQu}
    \begin{cases}
      \Delta_{g} u-fu=0 \;\text{on}\; M \\
      u\to 1 \;\text{as}\; r\to \infty
    \end{cases}
   \end{equation}
has a unique positive solution. Moreover, near infinity $u$ has asymptotics
\begin{eqnarray*}
% \nonumber % Remove numbering (before each equation)
  u &=& 1-\frac{A}{r^{k-2}}+O(r^{-k+1})
\end{eqnarray*}
where $A=C\int_{M}fu\, d\operatorname{vol}_{g}$ for some $C>0$.
\end{proposition}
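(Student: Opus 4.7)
The plan is to write $u = 1+v$ and solve the linear equation
$$\Delta_g v - fv = f, \qquad v \to 0 \text{ at infinity},$$
by a Lax--Milgram argument on the Hilbert space $H$ obtained as the completion of $C_c^\infty(M)$ in the seminorm $\|\nabla\cdot\|_{L^2(g)}$. The Sobolev inequality established in the previous lemma embeds $H$ continuously into $L^{2n/(n-2)}(M,g)$ with some constant $c$, so H\"older yields
$$\int_M |f_-|\, v^2\, d\operatorname{vol}_g \leq \|f_-\|_{L^{n/2}}\, \|v\|_{L^{2n/(n-2)}}^2 \leq c^2 \|f_-\|_{L^{n/2}}\, \|\nabla v\|_{L^2}^2.$$
Choosing $\epsilon_0 := 1/(2c^2)$ makes the bilinear form $B(v,\phi) := \int_M \nabla v \cdot \nabla \phi\, d\operatorname{vol}_g + \int_M f v \phi\, d\operatorname{vol}_g$ coercive and bounded on $H$, and the functional $\phi \mapsto -\int_M f\phi\, d\operatorname{vol}_g$ is continuous on $H$ since $f$ is bounded with compact support. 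Lax--Milgram then produces a unique weak solution $v \in H$, smooth by elliptic regularity; uniqueness in the class of solutions with $v\to 0$ follows from the same coercivity applied to differences.

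Positivity of $u = 1+v$ I would obtain from the same coercivity. Since $v\to 0$ at infinity, $u^- := \max(-u,0)$ lies in $H$, and testing $\Delta_g u - fu = 0$ against $u^-$ and integrating by parts gives
$$0 = \int_M |\nabla u^-|^2\, d\operatorname{vol}_g + \int_M f(u^-)^2\, d\operatorname{vol}_g \geq (1 - c^2\epsilon_0)\, \|\nabla u^-\|_{L^2}^2,$$
forcing $u^- \equiv 0$; the strong maximum principle then promotes $u \geq 0$ to $u > 0$.

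For the asymptotic expansion, since $f$ is compactly supported and $u$ is bounded, $fu$ is compactly supported, so the Green's function representation
$$v(x) = -\int_M G_g(x,y)\, f(y) u(y)\, d\operatorname{vol}_g(y)$$
is well defined and, by uniqueness, coincides with the Lax--Milgram solution. For $x$ outside a large fixed compact set, Proposition \ref{Greenprop}, applied via the symmetry of $G_g$ uniformly in $y \in \operatorname{supp}(fu)$, yields
$$G_g(x,y) = \frac{C}{\|x_{\mathbb{R}^k} - y_{\mathbb{R}^k}\|^{k-2}} + O\bigl(\|x_{\mathbb{R}^k} - y_{\mathbb{R}^k}\|^{-\min\{k-1,k-2+\tau\}}\bigr).$$
Expanding $\|x_{\mathbb{R}^k} - y_{\mathbb{R}^k}\|^{2-k} = r^{2-k} + O(r^{1-k})$ for $y$ in the fixed compact set and integrating against $fu$ then delivers the claimed expansion with $A = C \int_M fu\, d\operatorname{vol}_g$, $C>0$.

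The principal technical obstacle I anticipate is justifying that Proposition \ref{Greenprop} can be applied with uniform constants as the pole $y$ varies over the compact set $\operatorname{supp}(fu)$, since that proposition is phrased with a single fixed pole. This should reduce to inspecting its proof: the constants there come from $L^2$-estimates and the Moser iteration, which depend continuously on the pole and hence can be taken uniform on compacta.
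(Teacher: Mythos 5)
Your overall strategy matches the paper's: reduce to the linear problem for the perturbation, use the Sobolev inequality of the preceding lemma to get coercivity with the same smallness threshold on $\|f_{-}\|_{L^{n/2}}$, get positivity from the same test-function/maximum-principle mechanism, and read off the expansion from the Green's function of Proposition \ref{Greenprop}. You differ in two mechanisms. For existence the paper solves Dirichlet problems on an exhaustion $B_r$ (Fredholm alternative for each $B_r$, uniform $L^{2n/(n-2)}$ and $\dot H^1$ bounds, elliptic estimates and Arzel\`a--Ascoli), whereas you use Lax--Milgram on the completion of $C_c^\infty$ in the $\dot H^1$ norm; this is cleaner and gives uniqueness of the weak solution for free, at the cost of having to recover the boundary condition at infinity afterwards. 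For the asymptotics the paper does not write $v$ as a convolution over the support of $fu$; it fixes the far point $(x,y)$, pairs the equation against $\phi\,G(\cdot,(x,y))$ with $\phi$ a cutoff vanishing on a compact set, integrates by parts, kills the boundary terms using $\max_{\partial B_s}G\to 0$ and $\nabla v\in L^2$, and then computes $\lim r^{k-2}v((x,y))$ using $r^{k-2}G\to C$ and $r^{k-2}\|\nabla G\|\to 0$ on the fixed compact support of $\nabla\phi$ and of $f$. Your representation-formula route is equivalent in spirit and arguably more transparent.

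Two points need shoring up. First, Lax--Milgram only gives $v\in \dot H^1\cap L^{2n/(n-2)}$, not pointwise decay; you invoke $v\to 0$ both for $u\to 1$, for the compact support of $u^-$ in the positivity argument, and for identifying $v$ with the Green representation. This is recoverable (outside a compact set $\Delta_g v=fu$ vanishes, so $v$ is harmonic there, and Moser iteration with the uniform Sobolev constant on the end turns the smallness of $\|v\|_{L^{2n/(n-2)}}$ on far annuli into $\sup|v|\to 0$), but it must be said; the paper's exhaustion argument has the same issue and handles it no more explicitly. Second, the obstacle is not only uniformity of the error constants as the pole varies over $\operatorname{supp}(fu)$: as stated, the leading constant $C$ in Proposition \ref{Greenprop} is produced for a fixed pole and could a priori depend on it, in which case your formula would read $A=\int_M C(y)f u\,d\operatorname{vol}_g$ rather than $C\int_M fu\,d\operatorname{vol}_g$. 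Pole-independence does hold and follows from a flux computation: integrating $\Delta_g G(\cdot,p)=-\delta_p$ over large regions and using the expansion of $G$ and its gradient gives $(k-2)\,\omega_k\operatorname{Vol}(X)\,C(p)=1$ for every $p$, so $C$ is the universal constant $\bigl((k-2)\omega_k\operatorname{Vol}(X)\bigr)^{-1}$. Note the paper's own proof implicitly uses the same fact when it pulls a single constant $C$ out of the integrals over the compact region, so this is a step you should add rather than a defect of your route.
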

\begin{proof}
  Let $v=1-u$. Then \eqref{EQu} becomes
 \begin{equation}\label{FNEQ}
    \begin{cases}
      \Delta_{g} v-fv=-f \;\text{on}\; M \\
      v\to 0 \;\text{as}\; r \to \infty
    \end{cases}
   \end{equation}
   On a compact subset $B_{r}$, consider the Dirichlet problem
   \begin{equation}\label{NPEQ}
    \begin{cases}
      \Delta_{g} v_{r}-fv_{r}=-f \;\text{in}\; B_{r} \\
      v_{r}=0 \;\text{on}\; \partial B_{r}
    \end{cases}
   \end{equation}
   By Fredholm alternative, if the homogeneous equation
   \begin{equation}\label{UE}
    \begin{cases}
      \Delta_{g} v_{r}-fv_{r}=0 \;\text{in}\; B_{r} \\
      v_{r}=0 \;\text{on}\; \partial B_{r}
    \end{cases}
   \end{equation}
   has only zero solution, %provided $\|f_{-}\|_{L^{\frac{n}{2}}}$ is sufficiently small, 
   then equation (\ref{NPEQ}) has a unique solution. Suppose $\omega$ is a solution of equation (\ref{UE}). Multiplying $\omega$ to both sides of (\ref{UE}) and integrating by parts, by the Holder inequality with $p=\frac{n}{2},q=\frac{n}{n-2}$ and the Sobolev inequality with $p=2,p^{\ast}=\frac{2n}{n-2}$, we have
   \begin{eqnarray*}
   % \nonumber % Remove numbering (before each equation)
     \int_{B_{r}}|\nabla \omega|^2 d\operatorname{vol}_{g}&=& -\int_{B_{r}}f\omega^2d\operatorname{vol}_{g}\leq \int_{B_{r}}f_{-}\omega^2d\operatorname{vol}_{g}\\
     &\leq&\left(\int_{B_{r}}f_{-}^{\frac{n}{2}}d\operatorname{vol}_{g}\right)^{\frac{2}{n}}\left(\int_{B_{r}}\omega^{\frac{2n}{n-2}}d\operatorname{vol}_{g}\right)^{\frac{n-2}{n}}  \\
      &\leq&c_{1} \left(\int_{B_{r}}f_{-}^{\frac{n}{2}}d\operatorname{vol}_{g}\right)^{\frac{2}{n}}\left(\int_{B_{r}}|\nabla \omega|^2d\operatorname{vol}_{g}\right)
   \end{eqnarray*}
   Thus if $\|f_{-}\|_{L^{\frac{n}{2}}}< \frac{1}{c_{1}}$, then $\omega=0$. Therefore, equation (\ref{NPEQ}) has a unique solution $v_{r}$.
   Multiplying $v_{r}$ to both sides of (\ref{NPEQ}), using Holder inequality and Sobolev inequality again,
   \begin{eqnarray*}
   % \nonumber % Remove numbering (before each equation)
     \int_{B_{r}}|\nabla v_{r}|^2d\operatorname{vol}_{g} &\leq& \int_{B_{r}}f_{-}v^2_{r}d\operatorname{vol}_{g}+\int_{B_{r}}fv_{r}d\operatorname{vol}_{g} \\
      &\leq& c_{1}\left(\int_{B_{r}}f_{-}^{\frac{n}{2}}d\operatorname{vol}_{g}\right)^{\frac{2}{n}}\left(\int_{B_{r}}|\nabla v_r|^2d\operatorname{vol}_{g}\right)\\
      &&+\left(\int_{B_{r}}f^{\frac{2n}{n+2}}d\operatorname{vol}_{g}\right)^{\frac{n+2}{2n}}\left(\int_{B_{r}}v_{r}^{\frac{2n}{n-2}}d\operatorname{vol}_{g}\right)^{\frac{n-2}{2n}}\\
      &\leq&c_{1}\left(\int_{B_{r}}f_{-}^{\frac{n}{2}}d\operatorname{vol}_{g}\right)^{\frac{2}{n}}\left(\int_{B_{r}}|\nabla v_r|^2d\operatorname{vol}_{g}\right)\\
      &&+c_{1}\left(\int_{B_{r}}f^{\frac{2n}{n+2}}d\operatorname{vol}_{g}\right)^{\frac{n+2}{2n}}\left(\int_{B_{r}}| \nabla v_r|^{2}d\operatorname{vol}_{g}\right)^{\frac{1}{2}}
   \end{eqnarray*}
   Then there is a constant $c_{2}$ depending on $(M,g,f)$ such that $\|v_{r}\|_{L^{\frac{2n}{n-2}}}<c_{2}$ and $\|\nabla v_{r}\|_{L^2}<c_{2}$. The standard theory of elliptic equations concludes that $v_{r}$ has uniformly bounded $C^{2,\alpha}$ norm. By Arzela-Ascoli we may pass to a limit and conclude that equation (\ref{FNEQ}) has a solution.

   A similar argument proves that the solution of equation (\ref{EQu}) is nonnegative everywhere. Otherwise there exists an open set $\Omega$ such that
   \begin{equation*}
    \begin{cases}
      \Delta_{g} u-fu=0 \;\text{in}\; \Omega \\
      u= 0 \;\text{on}\; \partial \Omega.
    \end{cases}
   \end{equation*}
   This contradicts with the Sobolev inequality and the choice of $\epsilon_{0}$ as above since $u$ is the nonzero solution. By the strong maximum principle $u$ is positive everywhere.

In the following, we use the asymptotic estimate of the Green's function $G$ and $\nabla G$ to obtain the asymptitc behavior of the solution. 
 %\todo{What do you mean here? Please clarify, Just construct a cut off function to make the $M_{0}$ no useful} 
 Let $\phi(r)$ be a smooth cut-off function on $M$ with $\phi(r)=1$ on $M\backslash B_{R_1}$, $\phi(r)=0$ on $B_{R_2}$ for $R<R_{2}<R_{1}$. Let $dS$ be the boundary area form. Fix $(x, y)\in M\backslash B_{R_1}$ and choose $s$ sufficiently large so that $B_s$ contains $(x, y)$. %such that $B_{R_1}\subset D_{s}((x,y)) \subset B_{R'}$ for $R'$ . 
Multiply $G=G(z, (x, y))$ to $\Delta_{g} v=fv-f$ and integrate on $B_{s}$ about the variable $z\in B_{s}$.
%\todo{Please make all notations consistent with the previous ones} 
Then
\begin{eqnarray*}
% \nonumber % Remove numbering (before each equation)
  \int_{B_{s}}\phi G (fv-f)d\operatorname{vol}_{g} &=& \int_{B_{s}}\phi G \Delta_{g} vd\operatorname{vol}_{g}\\
  &=& -\int_{B_s}\langle\nabla(\phi G),\nabla v\rangle d\operatorname{vol}_{g}+\int_{\partial B_s}(\phi G)\langle\nu, \nabla v\rangle dS\\
  &=&\int_{B_s}G\Delta_{g}(\phi)  vd\operatorname{vol}_{g}+\int_{B_{s}}\phi v\Delta_{g} G  d\operatorname{vol}_{g}+2\int_{B_{s}}\langle\nabla\phi, \nabla G\rangle  vd\operatorname{vol}_{g}\\
  &&+\int_{\partial B_s}(\phi G)\langle\nu, \nabla v\rangle dS-\int_{\partial B_s}v\langle\nu, \nabla (\phi G)\rangle dS
\end{eqnarray*}
Since
\begin{eqnarray*}
% \nonumber % Remove numbering (before each equation)
  \left|\int_{\partial B_s}(\phi G)\langle\nu, \nabla v\rangle dS\right| &\leq& G((x,y),z)\int_{\partial B_s}|\langle\nu, \nabla v\rangle| dS\\
  &\leq& \max_{z\in \partial B_s}G((x,y), z)\int_{\partial B_s}|\nabla v|^2 dS.
\end{eqnarray*}
As $s\rightarrow \infty$, $\max_{z\in \partial B_s}G((x,y), z)\to 0$. Since $\int_{M}|\nabla v|^2 d\operatorname{vol}_{g}<\infty$, we deduce that 
$$\int_{\partial B_s}(\phi G)\langle\nu, \nabla v\rangle dS\to 0 , \mbox{as} \ s\to \infty.$$
Similarly we have
$$\int_{\partial B_s}v\langle\nu, \nabla (\phi G)\rangle dS \to 0 , \mbox{as} \ s\to \infty.$$
Thus, taking $s\rightarrow \infty$ so that $B_{s}\to M$, we obtain 
\begin{eqnarray*}
% \nonumber % Remove numbering (before each equation)
  \int_{M}\phi G (fv-f)d\operatorname{vol}_{g} &=& v((x,y))+\int_{M}G\Delta_{g}(\phi)  vd\operatorname{vol}_{g}+2\int_{M}\langle\nabla\phi, \nabla G\rangle  vd\operatorname{vol}_{g}.
\end{eqnarray*}
Therefore
\begin{eqnarray*}
% \nonumber % Remove numbering (before each equation)
  \lim_{r\to\infty}r^{k-2}v((x,y)) &=& -2\int_{M}\lim_{r\to\infty}r^{k-2}\langle\nabla\phi, \nabla G\rangle  vd\operatorname{vol}_{g}+\int_{M}\lim_{r\to\infty}r^{k-2}G\phi  (fv-f)d\operatorname{vol}_{g}\\
  &&+\int_{M}\lim_{r\to\infty}r^{k-2}G \Delta_{g}(1-\phi)  vd\operatorname{vol}_{g}\\
  &=&C\left[\int_{M}\phi  (fv-f)d\operatorname{vol}_{g}+\int_{M}\Delta_{g}(1-\phi)  vd\operatorname{vol}_{g}\right]\\
  &=&C\left[\int_{M}\phi  (fv-f)d\operatorname{vol}_{g}+\int_{M}(1-\phi) \Delta_{g} vd\operatorname{vol}_{g}\right]\\
  &=&C\int_{M}(fv-f)d\operatorname{vol}_{g}
\end{eqnarray*}
for $C>0$.
 \end{proof}
 
The next lemma is standard, which relates the masses of conformally related metrics.
   \begin{lemma}\label{lemma m}
     For metric $\tilde{g}=u^{\frac{4}{n-2}}((x,y))g$ outside a large compact set on $M^{n}$, and $u=1+\frac{m_0}{r^{k-2}}+O(r^{-k+1})$,
     $$m(\tilde{g})=m(g)+\frac{4(n-1)(k-2)}{n-2} m_0$$
   \end{lemma}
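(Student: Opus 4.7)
The plan is to compute $m(\tilde{g})-m(g)$ directly from the simplified definition
\[
m(g)=\lim_{R\to\infty}\frac{1}{\omega_{k}\operatorname{Vol}(X)}\int_{S_{R}\times X}\bigl(\partial_{i}g_{ij}-\partial_{j}g_{aa}\bigr)\ast dx_{j}\,d\operatorname{vol}_{g_{X}},
\]
working in the $\hat g$-orthonormal frame $\{e_a\}=\{\partial_{x_i},f_\alpha\}$. Write $\phi:=u^{4/(n-2)}$, so that the asymptotics of $u$ give
\[
\phi-1=\frac{4m_0}{n-2}\cdot\frac{1}{r^{k-2}}+O(r^{1-k}),\qquad \partial_j(\phi-1)=-\frac{4m_0(k-2)}{n-2}\cdot\frac{x_j}{r^{k}}+O(r^{-k}).
\]

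Since $\tilde g=\phi g$ outside a compact set, the integrand for $m(\tilde g)$ minus that for $m(g)$ equals $\partial_i[(\phi-1)g_{ij}]-\partial_j[(\phi-1)g_{aa}]$. First I would substitute $g_{ij}=\delta_{ij}+h_{ij}$ and $\sum_a g_{aa}=n+\sum_a h_{aa}$ and isolate the ``pure conformal'' piece $\partial_j(\phi-1)-n\,\partial_j(\phi-1)=(1-n)\partial_j(\phi-1)$. All remaining terms are products of one of $\phi-1,\partial(\phi-1)$ with one of $h,\partial h$; by the decay hypotheses $h=O(r^{-\tau})$ and $\hat\nabla h=O(r^{-\tau-1})$, each such term is $O(r^{1-k-\tau})$, so its integral over $S_R\times X$ (whose area grows like $R^{k-1}$) is $O(R^{-\tau})$ and vanishes in the limit.

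What remains is the evaluation of the pure conformal contribution. Using $\sum_j \partial_j f\,(\ast dx_j)|_{S_R}=\partial_r f\,dA_{S_R}$ for a function $f$ on $\mathbb{R}^k$, and $\operatorname{Vol}(S_R)=\omega_k R^{k-1}$, one computes
\[
\int_{S_R}\partial_r(\phi-1)\,dA_{S_R}=-\frac{4m_0(k-2)\omega_k}{n-2}+O(R^{-1}).
\]
Multiplying by the factor $(1-n)\operatorname{Vol}(X)$ coming from the product with $d\operatorname{vol}_{g_X}$ and dividing by the normalizing constant $\omega_k\operatorname{Vol}(X)$ then yields exactly $m(\tilde g)-m(g)=\frac{4(n-1)(k-2)}{n-2}m_0$.

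The computation is essentially bookkeeping. The one point requiring attention is the verification that each cross term involving $h$ decays strictly faster than $r^{1-k}$, so that it contributes zero in the limit; this follows immediately from $\tau>0$, which is far weaker than the standing hypothesis $\tau>(k-2)/2$, and hence poses no real obstacle. A sanity check: in the classical asymptotically Euclidean case $X=\mathrm{pt}$, $n=k$, the formula reduces to $m(\tilde g)-m(g)=4(n-1)m_0$, consistent with the usual conformal change of mass.
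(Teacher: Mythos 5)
Your computation is correct and is essentially the paper's own argument: both expand the mass integrand for $\tilde g=u^{\frac{4}{n-2}}g$ directly from the definition, extract the leading conformal term from the asymptotics of $u$ (giving $(1-n)\partial_r$ of the conformal factor, hence $\frac{4(n-1)(k-2)}{n-2}m_0$ after integrating over $S_R\times X$), and discard the cross terms with $h$ because they are $O(r^{1-k-\tau})$ and so vanish against the $R^{k-1}$ area growth. The only difference is bookkeeping — you isolate $\phi-1$ and write $g=\hat g+h$ explicitly, while the paper applies the product rule to $u^{\frac{4}{n-2}}g_{ij}$ and substitutes the asymptotics of $\partial_i u$ — so there is nothing substantive to add.
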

\begin{proof} Let $dS_{\rho}$ be the area form for the sphere of radius $\rho$ in $\mathbb{R}^{k}$.
  \begin{eqnarray*}
% \nonumber % Remove numbering (before each equation)
  m(\tilde{g}) &=&\frac{1}{\omega_{k}\operatorname{Vol}(X)}\lim_{\rho\to\infty}\int_{S_{\rho}\times X}\sum_{i,j}(\partial_{i}\tilde{g}_{ij}-\partial_{j}\tilde{g}_{aa})\frac{x^{j}}{\rho}dS_{\rho}d\operatorname{vol}_{g_{X}}  \\
               &=&\frac{1}{\omega_{k}\operatorname{Vol}(X)}\lim_{\rho\to\infty}\int_{S_{\rho}\times X}\sum_{i,j}(\partial_{i}(u^{\frac{4}{n-2}}g_{ij})-\partial_{j}(u^{\frac{4}{n-2}}g_{aa}))\frac{x^{j}}{\rho}dS_{\rho}d\operatorname{vol}_{g_{X}}\\
               &=&\frac{1}{\omega_{k}\operatorname{Vol}(X)}\lim_{\rho\to\infty}\int_{S_{\rho}\times X}\sum_{i,j}u^{\frac{4}{n-2}}(\partial_{i}g_{ij}-\partial_{j}g_{aa})\frac{x^{j}}{\rho}dS_{\rho}d\operatorname{vol}_{g_{X}}\\
               &&+\frac{1}{\omega_{k}\operatorname{Vol}(X)}\lim_{\rho\to\infty}\int_{S_{\rho}\times X}\sum_{i,j}\frac{4}{n-2}u^{\frac{4}{n-2}-1}(\partial_{i}ug_{ij}-\partial_{j}ug_{aa})\frac{x^{j}}{\rho}dS_{\rho}d\operatorname{vol}_{g_{X}}\\
               &=&\frac{1}{\omega_{k}\operatorname{Vol}(X)}\lim_{\rho\to\infty}\int_{S_{\rho}\times X}(1+\frac{m_0}{2\rho^{k-2}}+O(\rho^{-k+1}))^{\frac{4}{n-2}}\sum_{i,j}(\partial_{i}g_{ij}-\partial_{j}g_{aa})\frac{x^{j}}{\rho}dS_{\rho}d\operatorname{vol}_{g_{X}}\\
               &&+\frac{1}{\omega_{k}\operatorname{Vol}(X)}\lim_{\rho\to\infty}\int_{S_{\rho}\times X}          \sum_{i,j}\frac{4}{n-2}u^{\frac{4}{n-2}-1}(((2-k)m_0\rho^{1-k}\frac{x^{i}}{\rho}+O(\rho^{-k}))g_{ij}\\
              && -((2-k)m_0\rho^{1-k}\frac{x^{j}}{\rho}+O(\rho^{-k}))g_{aa})\frac{x^{j}}{\rho}dS_{\rho}d\operatorname{vol}_{g_{X}}\\
              &=&m(g)+\frac{4(n-1)(k-2)}{n-2}m_0 .
\end{eqnarray*}
\end{proof}

We are now ready to start the proof of compactification result.

\begin{proof}[Proof of Step 1]
     Following the proof of Proposition 4.11 in \cite{CLSZ} and Proposition 3.2 in \cite{ZJT}, write the metric $g$ as
   \begin{eqnarray*}
   % \nonumber % Remove numbering (before each equation)
     g &=& (1+\frac{m_{1}}{r^{k-2}})^{\frac{4}{n-2}}\hat{g}+\bar{g}
   \end{eqnarray*}
outside a large compact set with $m_{1}=\frac{n-2}{4(n-1)(k-2)}m$ and
   \begin{eqnarray}\label{eqnmasszero}
   % \nonumber % Remove numbering (before each equation)
     \lim_{\rho\to\infty}\int_{S_{\rho}\times X}\sum_{i,j}(\partial_{i}\bar{g}_{ij}-\partial_{j}\bar{g}_{aa})\frac{x^{j}}{\rho}dS_{\rho}d\operatorname{vol}_{g_{X}}&=&0
   \end{eqnarray}
     Let $\phi(r)$ be a cut-off function, $\phi(r)=1$ for $r\leq 2$ and $\phi(r)=0$ for $r\geq 3$ and $0\leq\phi(r)\leq 1$.
Define the metric $g^{\sigma}=(1+\frac{m_{1}}{r^{k-2}})^{\frac{4}{n-2}}\hat{g}+\phi(\frac{r}{\sigma})\bar{g}$.
Then $Sc_{g^{\sigma}}$ is a smooth function with compact support in $B_{3\sigma}\times X$. More precisely,
\begin{eqnarray*}
% \nonumber % Remove numbering (before each equation)
  Sc_{g^{\sigma}} &=&\begin{cases}
                       Sc_{g}, & \mbox{if } r\leq 2\sigma \\
                       O(\sigma^{-\tau-2}), & \mbox{if }2\sigma\leq r\leq 3\sigma \\
                       0, & \mbox{otherwise}.
                     \end{cases}
\end{eqnarray*}  Solve
\begin{eqnarray}\label{zeroscalarcurvature}
% \nonumber % Remove numbering (before each equation)
  \Delta_{g^{\sigma}}u-\frac{n-2}{4(n-1)}\varphi Sc_{g^{\sigma}}u &=& 0\\ \nonumber
  u&\to& 1\quad \text{as} \quad x\to\infty
\end{eqnarray}
where $\varphi(r)=\phi(\frac{r}{\sigma})=1$ for $2\sigma\leq r\leq 3\sigma$ and $\varphi(r)=0$ for $0\leq r\leq \sigma$ and $4\sigma\leq r<\infty$, $0\leq \varphi(r)\leq 1$.
Choose $\sigma$ sufficiency large to make $(\int_{M}|(\varphi Sc_{g^{\sigma}})_{-}|^{n/2}d\operatorname{vol}_{g^{\sigma}})^{2/n}=O(\sigma^{-\tau-2+2k/n})$ small.
By Proposition \ref{proposition1}, 
$$u=1-\frac{A_{\sigma}}{r^{k-2}}+O(r^{1-k})$$
and
$$A_{\sigma}=C\int_{M}\varphi Sc_{g^{\sigma}}ud\operatorname{vol}_{g^{\sigma}}.$$
Let $\tilde{g}=u^{\frac{4}{n-2}}g^{\sigma}$. Then %for any fix $\sigma$
\begin{eqnarray*}
% \nonumber % Remove numbering (before each equation)
  Sc_{\tilde{g}} &=& \frac{4(n-1)}{n-2}u^{-\frac{n+2}{n-2}}(-\Delta_{g^{\sigma}} u+\frac{n-2}{4(n-1)}Sc_{g^{\sigma}}u) \\
   &=& \frac{4(n-1)}{n-2}u^{-\frac{n+2}{n-2}}(-\Delta_{g^{\sigma}} u+\frac{n-2}{4(n-1)}\varphi Sc_{g^{\sigma}}u+\frac{n-2}{4(n-1)}(1-\varphi) Sc_{g^{\sigma}}u)\\
   &\geq&\frac{4(n-1)}{n-2}u^{-\frac{n+2}{n-2}}\frac{n-2}{4(n-1)}(1-\varphi) Sc_{g^{\sigma}}u\geq 0.
\end{eqnarray*}
Moreover,
$$m(\tilde{g})=-\frac{4(n-1)(k-2)}{n-2}A_{\sigma}+m(g^{\sigma})=-\frac{4(n-1)(k-2)}{n-2}A_{\sigma}+m$$ 
and thus 
\begin{eqnarray}
% \nonumber % Remove numbering (before each equation)
  |m(\tilde{g})-m(g)|=\frac{4(n-1)(k-2)}{n-2}|A_{\sigma}|.
\end{eqnarray}

  Thus if $|A_{\sigma}|<\epsilon$ can be made arbitrarily small for sufficiency large $\sigma$, then $|m(\tilde{g})-m(g)|<\epsilon$. Since $m(g)<0$, we can make $m(\tilde{g})<0$ by taking $\sigma$ sufficiency large.
  
  Now we prove $|A_{\sigma}|<\epsilon$ for sufficiency large $\sigma$. Let $v=1-u$, then %by Proposition \ref{proposition1}
  %\todo{Can you check these equations below?}
  \begin{eqnarray*}
  % \nonumber % Remove numbering (before each equation)
    |A_{\sigma}|&=&C\left|\int_{M}\varphi Sc_{g^{\sigma}}ud\operatorname{vol}_{g^{\sigma}}\right|\\
    &\leq&C\left|\int_{M}\varphi Sc_{g^{\sigma}}d\operatorname{vol}_{g^{\sigma}}\right|+C\left|\int_{M}\varphi Sc_{g^{\sigma}}vd\operatorname{vol}_{g^{\sigma}}\right|\\ 
    &\leq&C\left|\int_{M}\varphi Sc_{g^{\sigma}}d\operatorname{vol}_{g^{\sigma}}\right|+C\sigma^{-k+2}\int_{\{\sigma\leq r\leq 2\sigma\}\times X}\left| Sc_{g}\right|d\operatorname{vol}_{g}\\
    &&+C\left(\int_{\{2\sigma\leq r\leq 3\sigma\}\times X}\left|Sc_{g^{\sigma}}\right||v|d\operatorname{vol}_{g^{\sigma}}\right)\\
    &\leq&C\left|\int_{M}\varphi Sc_{g^{\sigma}}d\operatorname{vol}_{g^{\sigma}}\right|+O(\sigma^{-\tau})+C\sigma^{-k+2}\int_{\{\sigma\leq r\leq 2\sigma\}\times X}\left| Sc_{g}\right|d\operatorname{vol}_{g}\\
    &\leq&C\left|\int_{M}\varphi Sc_{g^{\sigma}}d\operatorname{vol}_{g^{\sigma}}\right|+O(\sigma^{-\tau})+C\sigma^{-k+2}
  \end{eqnarray*}
  by $Sc_{g}\in L^{1}(M)$.
  
  We now estimate the first term, where the integrand is nonzero only when $\sigma \leq r \leq 4\sigma$. In the asymptotic coordinates,  
  \begin{eqnarray*}
  % \nonumber % Remove numbering (before each equation)
    Sc_{g^{\sigma}} &=& |g^{\sigma}|^{-\frac{1}{2}}\partial_{a}(|g^{\sigma}|^{\frac{1}{2}}(g^{\sigma})^{ab}(\Gamma_{b}-\frac{1}{2}\partial_{b}(\log|g^{\sigma}|))\\
    &&-\frac{1}{2}(g^{\sigma})^{ab}\Gamma_{a}\partial_{b}(\log|g^{\sigma}|)+(g^{\sigma})^{ab}(g^{\sigma})^{cd}(g^{\sigma})^{ef}\Gamma_{ace}\Gamma_{bdf},
  \end{eqnarray*}
  where $\Gamma_{abc}=\frac{1}{2}(g^{\sigma}_{bc,a}+g^{\sigma}_{ac,b}-g^{\sigma}_{ab,c})$ and $\Gamma_{c}=(g^{\sigma})^{ab}\Gamma_{abc}$. If
  \begin{eqnarray*}
    \left|\int_{\{\sigma\leq r\leq 4\sigma\}\times X}\varphi Sc_{g^{\sigma}}d\operatorname{vol}_{g^{\sigma}}\right|&=&  \int_{\{\sigma\leq r\leq 4\sigma\}\times X}\varphi Sc_{g^{\sigma}}d\operatorname{vol}_{g^{\sigma}},
  \end{eqnarray*}
  then
  \begin{eqnarray*}
    0&\leq&  \int_{\{\sigma\leq r\leq 2\sigma\}\times X}(1-\varphi) Sc_{g}d\operatorname{vol}_{g^{\sigma}}=  \int_{\{\sigma\leq r\leq 4\sigma\}\times X}(1-\varphi) Sc_{g^{\sigma}}d\operatorname{vol}_{g^{\sigma}}.
  \end{eqnarray*}
  If\begin{eqnarray*}
    \left|\int_{\{\sigma\leq r\leq 4\sigma\}\times X}\varphi Sc_{g^{\sigma}}d\operatorname{vol}_{g^{\sigma}}\right|&=&  -\int_{\{\sigma\leq r\leq 4\sigma\}\times X}\varphi Sc_{g^{\sigma}}d\operatorname{vol}_{g^{\sigma}},
  \end{eqnarray*}
  then
  \begin{eqnarray*}
    0&\geq&  \int_{\{\sigma\leq r\leq 4\sigma\}\times X}\varphi Sc_{g}d\operatorname{vol}_{g^{\sigma}}\\
    &=&  \int_{\{\sigma\leq r\leq 2\sigma\}\times X}(1-\varphi) Sc_{g^{\sigma}}d\operatorname{vol}_{g^{\sigma}}+\int_{\{2\sigma\leq r\leq 3\sigma\}\times X} Sc_{g^{\sigma}}d\operatorname{vol}_{g^{\sigma}}\\
    &=&  \int_{\{\sigma\leq r\leq 2\sigma\}\times X}(1-\varphi) Sc_{g}d\operatorname{vol}_{g^{\sigma}}+\int_{\{2\sigma\leq r\leq 3\sigma\}\times X} Sc_{g^{\sigma}}d\operatorname{vol}_{g^{\sigma}}\\
    &\geq&\int_{\{2\sigma\leq r\leq 3\sigma\}\times X} Sc_{g^{\sigma}}d\operatorname{vol}_{g^{\sigma}}.
  \end{eqnarray*}
  Therefore
  \begin{eqnarray*}
  % \nonumber % Remove numbering (before each equation)
    \left|\int_{M}\varphi Sc_{g^{\sigma}}d\operatorname{vol}_{g^{\sigma}}\right| &\leq& \max\left\{\left|\int_{\{\sigma\leq r\leq 4\sigma\}\times X} Sc_{g^{\sigma}}d\operatorname{vol}_{g^{\sigma}}\right|,\left|\int_{\{2\sigma\leq r\leq 3\sigma\}\times X} Sc_{g^{\sigma}}d\operatorname{vol}_{g^{\sigma}}\right|\right\}.
  \end{eqnarray*}
  Thus, by $|g^{\sigma}|^{\frac{1}{2}}(g^{\sigma})^{ab}(\Gamma_{b}-\frac{1}{2}\partial_{b}(\log|g^{\sigma}|)=(g^{\sigma})_{ab,b}-(g^{\sigma})_{bb,a}+O(r^{-2\tau-1})$, there is
  \begin{eqnarray*}
  % \nonumber % Remove numbering (before each equation)
    &&\int_{\{\sigma\leq r\leq 4\sigma\}\times X} Sc_{g^{\sigma}}d\operatorname{vol}_{g^{\sigma}}\\
     &=& \int_{\{ r=4\sigma\}\times X} |g^{\sigma}|^{\frac{1}{2}}(g^{\sigma})^{ab}(\Gamma_{b}-\frac{1}{2}\partial_{b}(\log|g^{\sigma}|)\nu^{a}dS_{4\sigma}d\operatorname{vol}_{g_{X}}\\
    &&-\int_{\{ r=\sigma\}\times X} |g^{\sigma}|^{\frac{1}{2}}(g^{\sigma})^{ab}(\Gamma_{b}-\frac{1}{2}\partial_{b}(\log|g^{\sigma}|)\nu^{a}dS_{\sigma}d\operatorname{vol}_{g_{X}}+O(\sigma^{-2\tau-2+k})\\
    &=& \int_{\{ r=4\sigma\}\times X} ((g^{\sigma})_{ab,b}-(g^{\sigma})_{bb,a})\nu^{a}dS_{4\sigma}d\operatorname{vol}_{g_{X}}\\
    &&-\int_{\{ r=\sigma\}\times X} ((g^{\sigma})_{ab,b}-(g^{\sigma})_{bb,a})\nu^{a}dS_{\sigma}d\operatorname{vol}_{g_{X}}+O(\sigma^{-2\tau-2+k})\\
    &=&  \int_{\{ r=4\sigma\}\times X} (g_{ab,b}-g_{bb,a})\nu^{a}dS_{4\sigma}d\operatorname{vol}_{g_{X}}\\
    &&-\int_{\{ r=\sigma\}\times X} (g_{ab,b}- g_{bb,a})\nu^{a}dS_{\sigma}d\operatorname{vol}_{g_{X}} \\
    && + \int_{S_{\sigma}\times X}\sum_{i,j}(\partial_{i}\bar{g}_{ij}-\partial_{j}\bar{g}_{aa})\nu^{a} dS_{\sigma}d\operatorname{vol}_{g_{X}}+O(\sigma^{-2\tau-2+k})
  \end{eqnarray*}
  where $\nu=(\nu^{a})$ is the outer normal vector. The difference of the first two terms is arbitrarily small if $\sigma$ is sufficiently large by the existence of mass \cite{B}. The third term is small for large $\sigma$ by (\ref{eqnmasszero}). The other term can be handled similarly.
   Therefore, we have $|A_{\sigma}|<\epsilon$ for sufficiency large $\sigma$.
  
  Thus,  $\tilde{g}=\tilde{u}^{\frac{4}{n-2}}(g_{\mathbb{R}^{K}}+g_{X})$ with $\tilde{u}=1+\frac{\tilde{m}}{r^{k-2}}+O(r^{1-k})(\tilde{m}<0)$ and $Sc_{\tilde{g}} \geq 0$. Moreover,  $Sc_{\tilde{g}}=0$ by (\ref{zeroscalarcurvature})  outside a large compact set $K$, for example $K=B_{3\sigma}$.
  \end{proof}
  
\begin{proof}[Proof of Step 2]
  From Step 1, we know that there is a metric $g=u^{\frac{4}{n-2}}(g_{\mathbb{R}^{k}}+g_{X})$ with $u=1+\frac{m}{r^{k-2}}+O(r^{1-k})(m<0)$ and $Sc_{g}=0$ outside a large compact set and $Sc_{g}\geq 0$ on $M$. Since
  \begin{eqnarray*}
  Sc_{g}&=&\frac{4(n-1)}{n-2}u^{-\frac{n+2}{n-2}}(-\Delta_{g_{\mathbb{R}^{k}}+g_{X}} u+\frac{n-2}{4(n-1)}Sc_{g_{\mathbb{R}^{k}}+g_{X}}u)\\
  &=&\frac{4(n-1)}{n-2}u^{-\frac{n+2}{n-2}}(-\Delta_{g_{\mathbb{R}^{k}}+g_{X}} u)
  \end{eqnarray*}
  therefore,  $\Delta_{g_{\mathbb{R}^{k}}+g_{X}} u=0$ outside a large compact set.
  By $u=1+\frac{m}{r^{k-2}}+O(r^{1-k})(m<0)$, we can take $s_{1}$ large enough such that $u<1$ on $\{s_{1}\}\times X$ and $s_{1}>3\sigma$. Let 
  $$\epsilon=1-\sup_{\{s_{1}\}\times X} u(x).$$
  Then $u>1-\frac{\epsilon}{4}$ in $r\geq s_{2}$ for sufficiently large $s_{2}>s_{1}$. Take a cutoff function $\zeta:[0,+\infty)\to [0,1-\frac{\epsilon}{2}]$ such that $\zeta(t)=t$ for $t\leq 1-\frac{3\epsilon}{4}$ and $\zeta(t)=1-\frac{\epsilon}{2}$ for $t\geq 1-\frac{\epsilon}{4}$ with $\zeta'\geq 0$ and $\zeta''\leq0$ as well as $\zeta''<0$ in $(1-\frac{3\epsilon}{4},1-\frac{\epsilon}{4})$. Let
  \begin{eqnarray*}
  v=\begin{cases}
    \zeta\circ u,& r\geq s_{1};\\
    u,& r\leq s_{1}.
  \end{cases}
  \end{eqnarray*}
  $v$ is a smooth function defined on entire $M$ and $v=u$ around $\{r=s_{1}\}\times X$. We also have
  $$\Delta_{g_{\mathbb{R}^{k}}+g_{X}}v=\zeta''|\nabla^{g_{\mathbb{R}^{k}}+g_{X}}u|^2+\zeta'\Delta_{g_{\mathbb{R}^{k}}+g_{X}}u\leq 0 \quad\text{in}\quad \{r\geq s_{1}\}\times X$$
  and $\Delta_{g_{\mathbb{R}^{k}}+g_{X}}v<0$ at some point in $\{s_{1}<s<s_{2}\}\times X$. Define
  $$\tilde{g}=\left(\frac{v}{u}\right)^{\frac{4}{n-2}}g.$$
  Then $Sc_{\tilde{g}}\geq 0$ with strict inequality at some point on $M$ and $\tilde{g}$ is the metric $g_{\mathbb{R}^{k}}+g_{X}$ near infinity.   

  Then we can cut $M$ off outside a large compact set $K$ such that $\partial K=(\partial[0,1]^{k})\times X$ and glue the opposite faces of $[0,1]^{k}$ , such that it becomes $(M_{1}^{n}\#_{X}T^{k}\times X,\tilde{g})$ with nonnegative scalar curvature and strictly positive scalar curvature at some point. It can then be deformed to a metric with positive scalar curvature.
  \end{proof}
%\end{proof}

\section{Rigidity} \label{rigidity}

%\begin{proof}[Proof of the Rigidity part]
This section is devoted to the proof of the rigidity part of the Positive Mass Theorem, Theorem \ref{maintheorem}. 
Thus let $(M, g)$ be a manifold asymptotic to $\mathbb R^k \times X$ whose scalar curvature is nonnegative but its mass is zero.  We first show that $Sc_{g}=0$, and then $\operatorname{ Ric}_{g}=0$.

  If the scalar curvature is not identically zero. Then $ Sc_{g}(p)>0$ for some $p\in M^{n}$. Choose compact sets $K_{1}, K_{2}$, $p\in K_{1}\subset K_{2}$, and let $\varphi$ be a nonnegative smooth function which is $1$ on $K_{1}$ and $0$ on $M\backslash K_{2}$. Solve the equation
  \begin{eqnarray*}
  % \nonumber % Remove numbering (before each equation)
    \begin{cases}
      \Delta_{g} u-\frac{n-2}{4(n-1)}\varphi Sc_{g}u=0 \quad\text{on}\; M^{n}, \\
      u\to 1 \quad \text{as}\;r\to \infty.
    \end{cases}
  \end{eqnarray*}
  Then Proposition \ref{proposition1} gives a unique positive solution $u$. And the metric $\hat{g}=u^{\frac{4}{n-2}}g$ has  $Sc_{\hat{g}}\geq 0$ but
  $$m(\hat{g})=\frac{4(n-1)(k-2)}{n-2}m_{0}+m(g)=\frac{4(n-1)(k-2)}{n-2}m_{0},$$
  where $u=1+\frac{m_{0}}{r^{k-2}}+O(r^{1-k})$, and $m_{0}=-C\int_{M}\varphi Sc_{g}ud\operatorname{vol}_{g}<0$.
  That is,  $m(\hat{g})<0$, which is a contradiction.

  Next we prove that the Ricci curvature of $M^{n}$ is identically zero. Let $h$ be a compactly supported $(0,2)$ tensor and consider the deformation $g_{t}=g+th$. % where $g$ is an asymptotically flat metric with zero scalar curvature. 
  For $t$ sufficiently small, since the scalar curvature depends smoothly on $t$, we have that $\|Sc(g_{t})\|_{L^{\frac{n}{2}}}$ will be small. Thus by Proposition \ref{proposition1} we can again solve the equation
    \begin{eqnarray*}
  % \nonumber % Remove numbering (before each equation)
    \begin{cases}
      \Delta_{g_{t}} u_{t}-\frac{n-2}{4(n-1)}Sc_{g_{t}}u_{t}=0 \quad\text{on}\; M^{n} \\
      u_{t}\to 1 \quad \text{as}\;r\to \infty
    \end{cases}
  \end{eqnarray*}
   with a unique positive solution $u_{t}$. Define $\tilde{g}_{t}=u_{t}^{\frac{4}{n-2}}g_{t}$. Then $Sc_{\tilde{g}_{t}}=0$. Let $m(t)$ denote the mass of the metric $\tilde{g}_{t}$. Using the asymptotic formula again we see that
  $$m(t)=-C\int_{M}Sc_{g_{t}}u_{t}d\operatorname{vol_{g_{t}}},$$
  therefore $m(t)$ is $C^{1}$ differentiable about $t$. Taking its first derivative at $t=0$, and use the facts that $u_{0}\equiv 1$, $Sc_{g_{0}}=0$, we have
  \begin{eqnarray*}
  % \nonumber % Remove numbering (before each equation)
    \left.\frac{d}{dt}\right|_{t=0}m(t) &=& -C\int_{M}\dot{Sc}(0)d\operatorname{vol}_{g} \\
     &=&-C\int_{M}(\nabla_{i}\nabla_{j} h_{ij}-\Delta_{g} (g^{ij}h_{ij})-\langle \operatorname{Ric_{g}},h\rangle)d\operatorname{vol}_{g}\\
     &=&C\int_{M}\langle \operatorname{Ric_{g}},h\rangle d\operatorname{vol}_{g}
  \end{eqnarray*}
  If $\operatorname{Ric}_{g}$ is not identically zero then taking $h=\eta \operatorname{Ric}_{g}$, with $\eta$ is a cutoff function yields that
  $$\left.\frac{d}{dt}\right|_{t=0}m(t)<0.$$
  This means that for some small $t$, $m(t)<0$, again a contradiction.
  Then $(M^{n},g)$ is Ricci flat. Since $\operatorname{ Ric}_{g}=\operatorname{ Ric}_{g_{\mathbb{R}^{k}}}+\operatorname{ Ric}_{g_{X}}+O(r^{-\tau})$ outside a compact set, $\operatorname{ Ric}_{g_{X}}\equiv 0$, i.e. $(X,g_{X})$ is $Ricci$ flat.

  As in the proof of lemma 6 in \cite{Minerbe1} and Proposition 4.12 in \cite{CLSZ}, we can assume that $\tau\leq k-2$. We first find smooth functions on $M$, $y^i, i=1, \cdots, k$, such that $\Delta_{g}y^{i}=0 $ and $y^1, \cdots, y^k$ form an asymptotic coordinate system for the $\mathbb R^k$ factor. Let $x^i \in C^{\infty}(M\backslash K), i=1, \cdots, k$, be an asymptotic coordinate system for the $\mathbb R^k$ factor. Let $\chi$ is a cut-off function vanishing on $B_{r_1}$ and is identically $1$ on $M\backslash B_{r_2}$ for some $r_1 < r_2$.  Since $\Delta_{g}(\chi x^{i})=O(r^{-\tau-1})\in L^{2}_{\delta-2}(M)$ for $\delta>1+\frac{k}{2}-\tau$, there exists $u^{i}\in H^{2}_{\delta}(M)$ such that $\Delta_{g}u^{i}=\Delta_{g}(\chi x^{i})$  by Corollary 2 in \cite{Minerbe1}. Set $y^{i}=\chi x^{i}-u^{i}$. Then $\Delta_{g}y^{i}=0$ for $1\leq i\leq k$ on $M$. By the Moser iteration and Schauder estimate, 
  $$|u^{i}|+r|\partial u^{i}|+r^2|\partial^2 u^{i}|=O(r^{1-\delta'}),\quad \delta'=1+\frac{k}{2}-\delta.$$
  Fix $\epsilon_{1}>0$ small enough such that if $\delta=1+\frac{k}{2}-\tau+\epsilon_{1}$, then $\delta'=\tau-\epsilon_{1}>\frac{k-2}{2}$.
  With a similar analysis on the derivative of $\Delta_{g}u^i=\Delta_{g}(\chi x^{i})$, one also deduces
  $$r^3|\partial^3 u^{i}|=O(r^{1-\delta'}).$$
  The above computation is in the coordinate system $\{x^1,\cdots,x^{k},\{f_{\alpha}\}\}$.

  Since $y^{i}=\chi x^{i}-u^{i}$,  $\{y^1,\cdots,y^{k},\{f_{\alpha}\}\}$ form a coordinate system outside a large compact set by the above estimates. With respect the new coordinate system $y^{i}=\chi x^{i}-u^{i}$ and $\{y^1,\cdots,y^{k},\{f_{\alpha}\}\}$ the metric $g$ can be written as $g=g'_{0}+\omega$ where $g_{0}'=dy^2+g_{X}$ and $\omega$ satisfies
  $$|\omega|+r|\partial \omega|+r^2|\partial^2 \omega|=O(r^{-\tau}),\quad \tau>\frac{k-2}{2}.$$
  Now we compute in this new coordinate system and the derivative will be taken with respect to $y^{i}=\chi x^{i}-u^{i}$ and $\{y^1,\cdots,y^{k},\{f_{\alpha}\}\}$. Let $\partial_{i}=\partial_{y^{i}}$, $g_{ij}=g(\frac{\partial}{\partial y^{i}},\frac{\partial}{\partial y^{j}})$ and $r=|y|$.

  From the Bochner formula and $\Delta_{g}y^{i}=0$, we have
  \begin{eqnarray*}
  % \nonumber % Remove numbering (before each equation)
    \Delta_{g}\left(\frac{1}{2}|dy^{i}|^2_{g}\right) &=& |\nabla_{g}dy^{i}|^2_{g}, \\
    \Delta_{g}\left(\frac{1}{2}g^{ij}\right) &=& g(\nabla_{g}dy^{i},\nabla_{g}dy^{j}).
  \end{eqnarray*}
  Then $\Delta_{g}(g^{ij}-\delta_{ij})=O(r^{-2\delta'-2})$. From the above discussion we conclude that
  \begin{eqnarray*}
  % \nonumber % Remove numbering (before each equation)
    g^{ij}-\delta_{ij} &\in& L^{2}_{\mu}\quad\text{for any}\quad \mu>\frac{k}{2}-\delta' \\
    \Delta_{g} (g^{ij}-\delta_{ij}) &\in& L^{2}_{\mu'-2}\quad\text{for any}\quad \mu'>\frac{k}{2}-2\delta'.
  \end{eqnarray*}
  Since $\delta'\leq \tau\leq k-2$ and $\delta'>\frac{k-2}{2}$, therefore we can choose $1-\frac{k}{2}<\mu'<2-\frac{k}{2}<\mu\leq \frac{k}{2}$. By Proposition \ref{gradientproposition},
  $$g^{ij}=\delta_{ij}-c_{ij}r^{2-k}+v^{ij}$$
  where $c_{ij}$ are constants with $c_{ij}=c_{ji}$ and $v^{ij}\in H^{2}_{\mu'}$.
  After repeating the argument for $u^{i}$, we see that $v^{ij}$ are higher-order error terms satisfying, for some small $\epsilon_2>0$,
  $$|v^{ij}|+r|\partial v^{ij}|+r^2|\partial^2 v^{ij}|=O(r^{2-k-\epsilon_{2}}).$$
  After a possible orthogonal transformation of $\{y^{1},\cdots,y^{k}\}$, we can assume $c_{ij}=c_{i}\delta_{ij}$ without loss of generality. Finally we have
  $$g_{ij}=\delta_{ij}+\omega_{ij},\quad g_{i\alpha}=\omega_{i\alpha},\quad g_{\alpha\alpha}=1+\omega_{\alpha\alpha},$$
  such that
  $$|\omega^{ij}|+r|\partial \omega^{ij}|+r^2|\partial^2 \omega^{ij}|=O(r^{2-k-\epsilon_{2}}),$$
  and
  $$|\omega^{\alpha\alpha}|+r|\partial \omega^{\alpha\alpha}|+r^2|\partial^2 \omega^{\alpha\alpha}|=O(r^{-\delta'}),\quad \delta'>\frac{k-2}{2}.$$
  $$|\omega^{i\alpha}|+r|\partial \omega^{i\alpha}|+r^2|\partial^2 \omega^{i\alpha}|=O(r^{-\delta'}),\quad \delta'>\frac{k-2}{2}.$$
  By the relation $y^{i}=\chi x^{i}- u^{i}$ and the estimate of $u^{i}$, we see that the mass $m(g)$ calculated with respect to  $\{y^1,\cdots,y^{k},\{f_{\alpha}\}\}$ is the same as that with respect to $\{x^1,\cdots,x^{k},\{f_{\alpha}\}\}$. Since $\Delta_{g}y^{i}=0$, a straightforward computation yields
  \begin{equation}\label{massequation1}
    (k-2) \left(c_{i}-\frac{1}{2}\left(\sum_{j=1}^{k}c_{j}\right)\right)\frac{y^{i}}{|y|^{k}}+\sum_{\alpha}\partial_{\alpha}g_{i\alpha}+\frac{1}{2}\sum_{\alpha}\partial_{i}g_{\alpha\alpha}+O(r^{-\mu''})=0
  \end{equation}
  where $\mu''=\min\{k-1+\epsilon_{2},2\delta'+1\}>k-1$. Integrating (\ref{massequation1}) and summing over $i$ gives
  \begin{eqnarray*}
  % \nonumber % Remove numbering (before each equation)
    \lim_{\rho\to\infty}\int_{S_{\rho}\times X}(\partial_{i}g_{\alpha\alpha})\frac{y^{i}}{\rho}dS_{\rho}d\operatorname{vol}_{g_{X}} &=& \lim_{\rho\to\infty}\int_{S_{\rho}\times X}\frac{(k-2)^2}{k}\left(\sum_{i=1}^{k}c_{i}\right)\rho^{1-k} dS_{\rho}d\operatorname{vol}_{g_{X}}\\
    &=&\frac{(k-2)^2}{k}\omega_{k}\left(\sum_{i=1}^{k}c_{i}\right)\operatorname{Vol}(X).
  \end{eqnarray*}
  And a direct computation yields
  \begin{eqnarray*}
  % \nonumber % Remove numbering (before each equation)
    \lim_{\rho\to+\infty}\int_{S_{\rho}\times X}(\partial_{j}g_{ij}-\partial_{i}g_{jj})\frac{y^{i}}{\rho}dS_{\rho}d\operatorname{vol}_{g_{X}} &=& \lim_{\rho\to+\infty}\int_{S_{\rho}\times X}\frac{(k-2)(k-1)}{k}\left(\sum_{i=1}^{k}c_{i}\right)\rho^{1-k}dS_{\rho}d\operatorname{vol}_{g_{X}}\\
    &=&\frac{(k-2)(k-1)}{k}\omega_{k}\left(\sum_{i=1}^{k}c_{i}\right)\operatorname{Vol}(X)
  \end{eqnarray*}
  Thus
  \begin{equation}\label{massresult}
    m(g)=\frac{k-2}{2}\left(\sum_{i=1}^{k}c_{i}\right).
  \end{equation}

  Since $\Delta_{g}dy_{i}=(dd^{*}+d^{*}d)dy_{i}=d\Delta_{g} y_{i}=0$, by the Weitzenbovk formula for $1$-form $\omega$, i.e.
  $$\Delta_{g} \omega=\nabla^{\ast}\nabla \omega+\operatorname{Ric}(\omega^{\#},\cdot).$$
  one has, 
  \begin{eqnarray*}
  % \nonumber % Remove numbering (before each equation)
    0 &=& \sum_{i}\int_{M}\langle\nabla^{\ast}\nabla dy_{i},dy_{i}\rangle d\operatorname{vol}_{g} \\
     &=& \sum_{i}\int_{M}\langle\nabla dy_{i},\nabla dy_{i}\rangle d\operatorname{vol}_{g}- \sum_{i}\lim_{\rho\to \infty}\int_{S^{k-1}_{\rho}\times X}\langle\nabla_{a} dy_{i},dy_{i} \rangle\nu^{a} dS_{\rho}d\operatorname{vol}_{g_{X}}
  \end{eqnarray*}
  where $\nu=(\nu^{1},\cdots,\nu^{n})$ is the unit outer normal.
  On the other hand,
  \begin{eqnarray*}
  % \nonumber % Remove numbering (before each equation)
    \nabla_{a}dy^{i} &=& -\Gamma_{ak}^{i}dy^{k}-\Gamma_{a\alpha}^{i}df_{\alpha}.
  \end{eqnarray*}
  Hence,
  \begin{eqnarray*}
  % \nonumber % Remove numbering (before each equation)
   && \lim_{\rho\to \infty} \sum_{i}\int_{S^{k-1}_{\rho}\times X}\langle\nabla_{a} dy^{i},dy^{i} \rangle\nu^{a} dS_{\rho}d\operatorname{vol}_{g_{X}}\\
    &=&\lim_{\rho\to \infty} \int_{S^{k-1}_{\rho}\times X}(\partial_{j}g_{ij}-\partial_{i}g_{aa}+\frac{1}{2}\partial_{i}g_{\alpha\alpha})\frac{y^{i}}{\rho} dS_{\rho}d\operatorname{vol}_{g_{X}}\\
    &=&\frac{1}{2}(k-2)\omega_{k}\left(\sum_{i=1}^{k}c_{i}\right)\operatorname{Vol}(X)\\
    &=&\omega_{k}\operatorname{Vol}(X)m(g).
  \end{eqnarray*}
  Combining the discussion above we arrive at the equation
   \begin{eqnarray*}
  % \nonumber % Remove numbering (before each equation)
     \sum_{i}\int_{M}\langle\nabla dy^{i},\nabla dy^{i}\rangle d\operatorname{vol}_{g}&=&\omega_{k}\operatorname{Vol}(X)m(g).
  \end{eqnarray*}
  In particular $m(g)=0$ implies that $dy^{i}$ is parallel.
  
  Therefore $dy^{i}$ is parallel for $1\leq  i\leq k$, and as they are approaching orthonormal at infinity, they are exactly orthonormal on $M$. %Extent $dy_{i}$ on the whole $M^{n}$ paralleled. 
  Consider the map $F:M^{n}\to \mathbb{R}^{k}$, $F(p)=\{y_{1}(p),\cdots,y_{k}(p)\}$. Let $\Phi^{i}_{t}$ be the flow of $\nabla y_{i}$, which is a complete vector field since it has norm one. Since $\nabla y_{i}$ is parallel, it is a killing vector, i.e. the flow $\Phi^{i}_{t}$ action on $M^{n}$ is isometric. Therefore, for $q\in \mathbb{R}^{k}$,  $\Phi^{i}_{t}:F^{-1}(q)\to F^{-1}(q+t e_{i})$ is isometric and so is $\Phi^{1}_{y^1}\circ\cdots\circ\Phi^{k}_{y^k}:F^{-1}(q)\to F^{-1}(q+y)$. Since $y^{i}, 1\leq i \leq k$, form an asymptotic coordinate system for the $\mathbb R^k$ factor and $g$ is asymptotic to the product metric, 
  %i.e. $(F^{-1}(\infty),g|_{F^{-1}(\infty)})\cong (X,g_{X})$. 
  letting $y\to \infty$, we find an isometry $F^{-1}(q)\cong X$. Thus $M^{n}$ is a fiber bundle over $\mathbb{R}^{k}$ with fibers isometric to $X$. Since $\mathbb{R}^{k}$ is contractible, the bundle must be trivial. On the other hand, the de Rham Decomposition Theorem says that this is a local metric product. Therefore $(M^{n},g)\cong (\mathbb{R}^{k}\times X,g_{\mathbb{R}^{k}}+g_{X})$.

 From the proof of the rigid part, as a generalization of Proposition 4.12 in \cite{CLSZ}, we have 
    \begin{proposition}
      For a complete noncompact smooth manifold $(M^{n},g)$ asymptotic $(\mathbb{R}^{k}\times X^{n-k},\hat{g}=g_{\mathbb{R}^{k}}+g_{x})$, if $|\operatorname{ Ric}|_{\hat{g}}+r|\nabla \operatorname{ Ric}|_{\hat{g}}=O(r^{-k-\epsilon})$ for some $\epsilon>0$ then 
      $$m^{GB}(g)=c\,m(g)$$
      for some $c> 0$.
    \end{proposition}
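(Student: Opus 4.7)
The plan is to adapt the asymptotic analysis carried out in the proof of the rigidity part of Theorem \ref{maintheorem}. The only place where Ricci flatness was essential there is in simplifying the Bochner formula; under the hypothesis $|\operatorname{Ric}|_{\hat{g}} + r|\nabla\operatorname{Ric}|_{\hat{g}} = O(r^{-k-\epsilon})$, the Ricci contribution is absorbed into a higher-order error of the same kind as the gradient-squared term. The main technical obstacle is just to verify that the weighted-$L^2$ thresholds governing Proposition \ref{gradientproposition} are not violated by this Ricci contribution; since $\epsilon > 0$ places $r^{-k-\epsilon}$ strictly below the critical weight $r^{-k}$, this is immediate, and the $\nabla\operatorname{Ric}$ hypothesis supplies the one-derivative estimates on the error that are needed to differentiate termwise later on.

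First I would construct harmonic functions $y^1,\dots,y^k$ on $M$ with $\Delta_g y^i = 0$ forming an asymptotic coordinate system for the Euclidean factor, exactly as in the rigidity argument. Applying the Bochner formula for harmonic functions gives
\[
\Delta_g g^{ij} \;=\; 2\, g(\nabla dy^i,\nabla dy^j) \;+\; 2\operatorname{Ric}(dy^i,dy^j).
\]
The gradient term is $O(r^{-2\delta'-2})$ with $\delta' > \tfrac{k-2}{2}$, while the Ricci term is $O(r^{-k-\epsilon})$; both decay strictly faster than $r^{-k}$, placing the right-hand side in $L^2_{\mu'-2}$ for some $\mu' < 2 - \tfrac{k}{2}$. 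Proposition \ref{gradientproposition} then yields
\[
g^{ij} \;=\; \delta_{ij} - c_{ij}\, r^{2-k} + v^{ij},
\]
with $v^{ij}$ of higher order and $c_{ij}$ symmetric constants; diagonalizing by an orthogonal change of $\{y^i\}$ one may assume $c_{ij} = c_i \delta_{ij}$.

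With this expansion in hand, expanding $\Delta_g y^i = 0$ in the new coordinates reproduces the pointwise identity from the rigidity proof,
\[
(k-2)\bigl(c_i - \tfrac{1}{2}\textstyle\sum_j c_j\bigr)\frac{y^i}{|y|^k} \;+\; \sum_\alpha \partial_\alpha g_{i\alpha} \;+\; \tfrac{1}{2}\sum_\alpha \partial_i g_{\alpha\alpha} \;+\; O(r^{-\mu''}) \;=\; 0,
\]
with $\mu'' > k-1$. Multiplying by $y^i/\rho$, summing over $i$, and integrating over $S_\rho \times X$, the $\sum_{i,\alpha}\partial_\alpha g_{i\alpha}$ term vanishes by Stokes' theorem on the closed fiber $X$, and the remaining boundary computation from the rigidity proof yields simultaneously
\[
m(g) \;=\; \frac{k-2}{2}\sum_{i=1}^k c_i, \qquad \lim_{\rho\to\infty}\int_{S_\rho \times X}\sum_{i,\alpha}(\partial_i g_{\alpha\alpha})\frac{y^i}{\rho}\, dS_\rho\, d\operatorname{vol}_{g_X} \;=\; \frac{(k-2)^2}{k}\,\omega_k \operatorname{Vol}(X)\sum_{i=1}^k c_i.
\]
Since $\operatorname{Tr}_{g_X} g = \sum_\alpha g_{\alpha\alpha}$, the second limit is exactly $\lim_{R\to\infty}\int_{\partial B_R} *_{\hat{g}} d(\operatorname{Tr}_{g_X} g)$. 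Substituting into the definition of $m^{GB}$ and eliminating $\sum_i c_i$ gives
\[
m^{GB}(g) \;=\; m(g) + \frac{k-2}{k}\, m(g) \;=\; \frac{2(k-1)}{k}\, m(g),
\]
so $c = \tfrac{2(k-1)}{k} > 0$, as required.
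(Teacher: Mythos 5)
Your argument is correct and is essentially the paper's own route: the paper obtains this proposition precisely by rerunning the rigidity-section analysis (harmonic coordinates $y^i$, Proposition \ref{gradientproposition}, the expansion $g^{ij}=\delta_{ij}-c_{ij}r^{2-k}+v^{ij}$, and the boundary integrals forced by $\Delta_g y^i=0$), with the Bochner--Ricci term kept as a subcritical $O(r^{-k-\epsilon})$ error and the $\nabla\operatorname{Ric}$ bound supplying the derivative estimates, exactly as you describe. The only caveat is your explicit constant: it is inherited from the paper's \eqref{massresult}, whose coefficient is inconsistent with the two displayed boundary integrals there — since $m$ uses the full trace $g_{aa}$, those displays force $m(g)=\tfrac{k-2}{k}\sum_i c_i$ while $\tfrac{k-2}{2}\sum_i c_i$ is actually $m^{GB}(g)$, so the proportionality is $m^{GB}(g)=\tfrac{k}{2}\,m(g)$ rather than $\tfrac{2(k-1)}{k}\,m(g)$; this is harmless for the statement, which only asserts $c>0$.
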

    
    It is very interesting that this notion of the Gauss-Bonnet mass plays a crucial role in the rigidity part of the Positive Mass Theorem.


\begin{thebibliography}{99}
\bibitem{AM}
Amiya Mukherjee, Atiyah-Singer Index Theorem An Introduction; Texts And Readings 69 In Mathematics.
%\bibitem{3}Dan A. Lee, Geometric Relativity; Providence, Rhode Island : American Mathematical Society, [2019] | Series: Graduate studies in mathematics ; volume 201.
\bibitem{B}
Robert Bartnik; The mass of an asymptotically flat manifold; Comm. Pure Appl. Math. 36, 661–693 (1986).
\bibitem{CLSZ}
Jie Chen, Peng Liu, Yuguang Shi, And Jintian Zhu; Incompressible Hypersurface, Positive Scalar Curvature And Positive Mass Theorem; arXiv:2112.14442v1 [math.DG] 29 Dec 2021.
\bibitem{OCCL}
Otis Chodosh, Chao Li; Generalized Soap Bubbles And The Topology Of Manifolds With Positive Scalar Curvature, arXiv:2008.11888v3 [math.DG] 11 Sep 2020.
\bibitem{D}
Xianzhe Dai; A positive mass theorem for spaces with asymptotic SUSY compactification; Comm. Math. Phys. 244 (2004), no. 2, 335–345. MR 2031034.
\bibitem{DWZ}
Xianzhe Dai, Guofang Wei, Zhenlei Zhang; Local Sobolev constant estimate for integral Ricci curvature bounds; Adv. in Math. 325(2018) 1–33.
%\bibitem{5}
%Georges de Rham; DifFerentiable Manifolds Forms, Currents, Harmonic Forms; Springer-Verlag Berlin Heidelberg New York Tokyo 1984.
\bibitem{F}
Akito Futaki; Scalar-flat closed manifolds not admitting positive scalar curvature metrics; Invent. Math. 112, 23-29 (1993). 
%\bibitem{Grigor}
%Grigor'yan, Alexander; Heat kernel upper bounds on a complete non-compact manifold, Rev. Mat. Iberoamericana 10 (1994), no. 2, 395–452.
\bibitem{6}
Mikhael Gromov, H. Blaine Lawson; Positive scalar curvature and the Dirac operator on complete riemannian manifolds; Inst. Hautes Etudes Sci. Publ. Math. No. 58 (1983), 83–196 (1984). MR 720933.
\bibitem{LP}
Peter Li, Geometric Analysis; Published in the United States of America by Cambridge University Press, New York.
\bibitem{LiuShiZhu}
Peng Liu, Yuguang Shi, And Jintian Zhu; Positive mass theorems of ALF and ALG manifolds; arXiv:2103.11289v2 [math.DG] 23 Mar 2021.
\bibitem{Minerbe1}
Vincent Minerbe, A Mass for ALF Manifolds; Commun. Math. Phys. 289, 925–955 (2009), Digital Object Identifier (DOI) 10.1007/s00220-009-0823-3.
\bibitem{1}
Richard M. Schoen; Topics in scalar curvature Spring 2017  Notes by Chao Li, ETH.
\bibitem{SY}
Richard Schoen and Shing Tung Yau, On the proof of the positive mass conjecture in general relativity; Comm. Math. Phys. 65 (1979), no. 1, 45–76. MR 526976.
\bibitem{SY1}
 Richard Schoen and Shing-Tung Yau; Positive scalar curvature and minimal hypersurface singularities; arXiv:1704.05490v1 [math.DG] 18 Apr 2017.
 \bibitem{St}
 Stephan Stolz; Simply connected manifolds of positive scalar curvature; Annals of Mathematics 136 (1992),
no. 3, 511-540.
\bibitem{W}
Edward Witten; A new proof of the positive energy theorem; Comm. Math. Phys. 80 (1981), no. 3, 381–402. MR 626707.
\bibitem{WZ} 
Xiangsheng Wang and Weiping Zhang; A note on the generalized Geroch Conjecture for complete spin manifolds; arXiv:2204.09184v1 [math.DG] 20 Apr 2022.
\bibitem{ZJT}
Jintian Zhu; Positive mass theorem with arbitrary ends and its application; arXiv:2204.05491v1 [math.DG] 12 Apr 2022.
\end{thebibliography}
\end{document}